\definecolor{darkgreen}{rgb}{0,0.4,0}
\definecolor{BrickRed}{rgb}{0.65,0.08,0}
\newtheorem{theorem}{Theorem}
\newtheorem*{theorem*}{Theorem}
\newtheorem{lemma}[theorem]{Lemma}
\newtheorem{proposition}[theorem]{Proposition}
\newtheorem*{main-thm}{Main Theorem}
\crefname{assumption}{Assumption}{Assumptions}
\theoremstyle{definition}
\newtheorem{remark}[theorem]{Remark}
\newtheorem{conjecture}[theorem]{Conjecture}
\crefname{conjecture}{Conjecture}{Conjectures}
\newcommand{\eps}{\varepsilon}
\renewcommand{\to}{\rightarrow}
\newcommand{\whp}{with high probability}
\newcommand{\T}{\mathcal{T}}
\newcommand{\bT}{\mathbf{T}_{2n,g_n}}
\newcommand{\sep}{\text{sepsys}(\bT)}
\newcommand{\nto}{n \to \infty}
\newcommand{\fe}{fat eight}
\newcommand{\te}{thin eight}
\newcommand{\sncc}{SNCC}
\newcommand{\CC}{C(t)}
\newcommand{\inv}{^{-1}}
\newcommand{\Ts}{\T^{\text{simple}}}
\newcommand{\Tt}{\T^{\text{thin}}}
\newcommand{\Tf}{\T^{\text{fat}}}
\providecommand{\keywords}[1]
{
  \small	
  \noindent
  \textbf{{Keywords:}} #1
}
\begin{document}

\title{\textbf{The separating systole and the genus ratio of high genus triangulations}}

\author{%
Baptiste Louf\thanks{CNRS and Institut de Mathématiques de Bordeaux, France. Partially supported by ANR CartesEtPlus (ANR-23-CE48-0018) and ANR HighGG (ANR-24-CE40-2078-01).}}

\maketitle

\begin{abstract}
We show that the separating systole of high genus triangulations is of logarithmic order (in the size of the triangulation). Our methods also allow us to show an enumerative result, i.e. the convergence of the "genus ratio" for high genus triangulations. This complements the convergence of the "size ratio" that was proven in previous work with Budzinski.
\end{abstract}

\keywords{Random maps, high genus, systole}


\section{Introduction}
In this paper we study random maps on surfaces, in a regime in which both their size $n$ and their genus $g$ go to infinity. Recall that a \emph{map} is a discrete surface made by gluing polygons along their edges, and a \emph{triangulation} is a map built entirely from triangles.

In the planar case ($g=0$, $n \to \infty$), the geometry of random maps is now well understood. On a local scale, their structure is described by their local limit \cite{AS03,Kri05, CD06, Bud15}, while on a global scale, results cover their diameter and scaling limit~\cite{CS04,LG11, Mie11, Mar16}. These results rely heavily on enumerative formulas~\cite{Tut62,Tut63} and explicit bijections~\cite{CV81,Sch98these,BDG04}. Similar results have also been established for maps of fixed genus $g>0$ \cite{Chapuy:profile, BM22}.

When both $n$ and $g$ grow large, exact asymptotics are not available. However, for \emph{unicellular maps} (maps with a single face), a bijection~\cite{CFF13} provides a way to understand their geometry~\cite{ACCR13,Ray13a, Louf-expander, JansonLouf1, JansonLouf2}.

Beyond the unicellular case, a series of recent results study more general classes of maps, such as triangulations or maps with prescribed face degrees. Their local behavior has been uncovered~\cite{BL19,BL20}, confirming a conjecture of Benjamini and Curien (see ~\cite[Chapter 6]{B13book} and ~\cite{CurPSHIT}). Global properties have also been investigated: both the \emph{planarity radius}~\cite{Louf} (a discrete analogue of the injectivity radius) and the \emph{diameter}~\cite{BCL} grow logarithmically with the size of the map.

Results for high-genus maps often resemble those for another model of random surfaces: large-genus Weil--Petersson hyperbolic surfaces (including a strong conjectural link with unicellular maps~\cite{JansonLouf2, MP19}). Thanks again to precise asymptotics, many geometric properties have been established in that model (see e.g. \cite{Mir13,GPY11,NWX,MP19,PWX,AM,HMT}).

In this paper, we focus on a different global observable for random high-genus triangulations: the \emph{separating systole}, which is the length of the shortest separating but non-contractible cycle. Once again, it is analogous to existing results for hyperbolic surfaces~\cite{NWX,PWX}.

In all that follows, we fix $\theta\in(0,1/2)$ and a sequence $(g_n)$ such that $\frac{g_n}{n}\to \theta$. Let $\bT$ be a uniform triangulation of genus $g_n$ with $2n$ faces. 
\begin{theorem}\label{thm_sepsys}
With high probability\footnote{that is, with probability tending to $1$ as $\nto$, abbreviated later as whp.}, the separating systole of $\bT$ is of logarithmic order, i.e. there exists two constants $0<A<B$ (depending on $\theta$) such that whp
\[A\log(n)\leq \sep\leq B\log(n)\] 
\end{theorem}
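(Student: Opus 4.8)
The plan is to exhibit, with high probability, a short separating cycle. The natural candidate is the boundary of a small neighbourhood in the map. Concretely, I would grow a ball $B_r(v)$ around a fixed (or typical) vertex $v$ in the primal graph; if this ball is non-simply-connected by the time it reaches radius $r \sim c\log n$, its boundary contains a non-contractible cycle, but this boundary need not be separating. To force separation, the cleaner approach is to look for a small genus-zero piece that is "cut off" by a short cycle — equivalently, to find a short cycle whose removal disconnects the surface. I would argue as follows: condition on the local limit of $\bT$ (the PSHIT/hyperbolic-type local limit from~\cite{BL19,BL20}), and use that balls of radius $r$ have size exponential in $r$ but still $n^{o(1)}$ for $r = c\log n$ with $c$ small. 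A standard "exploration until you close up a handle" argument shows that within such a ball one typically encounters a non-contractible cycle of length $O(\log n)$; the subtlety is upgrading \emph{non-contractible} to \emph{separating}. For this I would use a counting/surgery argument: among all the short non-contractible cycles found in many disjoint small balls (there are $\Theta(n/\mathrm{poly}(n))$ essentially independent regions), the probability that \emph{none} of them is separating is small, because a small topological disk attached to the bulk along a short cycle appears with polynomially small but non-negligible probability in each region, and these events are nearly independent across regions. Alternatively — and this is probably the slickest route — I would first find \emph{two} disjoint non-contractible cycles of length $O(\log n)$ in disjoint balls; if neither is separating on its own, a homological argument shows some $\mathbb{Z}/2$-combination of them, realised as a cycle of length $O(\log n)$ in the union of the two balls (which are far apart, so one must also use a short path between them — here is a gap to be careful with), is separating. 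I expect the honest way is via the enumerative estimates below.

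**Lower bound strategy:** For the lower bound I would prove that short separating cycles simply do not exist, via a first-moment/enumeration argument. A separating cycle of length $\ell$ in $\bT$ cuts the surface into two maps with boundary, of genera $g_1 + g_2 = g_n$ and sizes summing to $2n$, glued along a common boundary of length $\ell$. So the number of (marked) separating cycles of length $\ell$ is
\[
\sum_{\substack{g_1+g_2 = g_n \\ n_1 + n_2 = 2n}} T_{g_1,n_1,\ell}\, T_{g_2,n_2,\ell},
\]
where $T_{g,m,\ell}$ counts triangulations of genus $g$ with $m$ triangles and a boundary of length $\ell$. Dividing by the total count $T_{2n,g_n}$ and using the asymptotic enumeration results that underlie~\cite{BL19,BL20,BCL} (the exponential growth rate of $T_{g,n}$ in the linear-genus regime, which is exactly where one needs to know that the "genus ratio" is well-behaved — note this is the companion enumerative result announced in the abstract), I would show that the expected number of separating cycles of length at most $A\log n$ tends to $0$ once $A$ is small enough. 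The mechanism is that splitting the surface along a short boundary forces a partition of the "genus budget" $g_n$ between the two sides, and the generating-function asymptotics penalise this by an exponential factor $e^{-c\,(\text{deficit})}$ unless the split is very unbalanced — but a very unbalanced split means one side is a small near-planar piece bounded by a short cycle, and planar pieces bounded by a cycle of length $\ell$ number at most $C^\ell$, which is $n^{o(1)}$ for $\ell = A\log n$ with $A$ small, while the "cost" of removing $\ell$ edges and creating a boundary in the bulk is polynomially large. Summing the geometric series over $\ell$ and over the location of the cycle (a factor $O(n)$) still gives $o(1)$.

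**Main obstacle.** The hard part is the lower bound's enumerative input: controlling the subexponential corrections in $T_{g,m,\ell}$ uniformly over all splits $g_1 + g_2 = g_n$ and $n_1 + n_2 = 2n$, including the degenerate regimes where one side is tiny or genus-zero. One must combine (i) the large-genus asymptotics of triangulation counts (the $e^{c \cdot n}$ growth with the right constant as a function of $\theta$), (ii) bounds on triangulations with a boundary, relating $T_{g,m,\ell}$ to $T_{g,m'}$ via root-edge or boundary-contraction surgery at a controlled exponential cost in $\ell$, and (iii) a convexity/concentration statement ensuring the sum over splits is dominated by its most unbalanced terms. This is precisely why the paper also needs the convergence of the genus ratio: without knowing $\frac{1}{n}\log T_{2n,g_n}$ converges, one cannot compare the numerator and denominator cleanly. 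For the upper bound, the main obstacle is instead the topological bookkeeping — ensuring the short non-contractible cycle one finds by local exploration can be taken \emph{separating}, which I would handle by the two-cycles-plus-connecting-path homological surgery sketched above, or by directly searching for a cut-off planar bubble and estimating its probability from the same enumerative asymptotics.
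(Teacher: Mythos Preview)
Your lower-bound strategy has a genuine gap: you implicitly assume the minimizing separating non-contractible cycle is \emph{simple}, so that cutting along it yields two triangulations with simple boundaries and the sum
\[
\sum_{g_1+g_2=g_n}\sum_{n_1+n_2=2n} T_{g_1,n_1,\ell}\,T_{g_2,n_2,\ell}
\]
makes sense. But the separating systole in this paper is defined over \emph{all} closed curves, simple or not, and the paper stresses (see the introduction and the remark after the theorem) that even the existence of a simple SNCC in a triangulation is open (Barnette's conjecture). A non-simple null-homologous curve need not cut the surface into two pieces at all, and there is no clean surgery that turns it into a pair of maps with boundary. This is exactly the main difficulty the paper isolates: its \cref{thm_eights} shows that a shortest SNCC forces the presence of one of three concrete configurations (a simple SNCC, a ``fat eight'', or a ``thin eight''), each of which \emph{can} be cut along; the first-moment counts (\cref{prop_simple,prop_fat,prop_thin}) are then run on those three families separately. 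Your enumeration handles only the first of the three, and the reduction to these three cases (the content of \cref{sec_geom}) is the heart of the argument you are missing.

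For the upper bound, your ball-growing/homological-surgery plan is far more involved than necessary and, as you yourself flag, has a real gap at the ``connect two disjoint non-separating cycles by a short path'' step: the two balls being far apart means this path has no reason to be $O(\log n)$. The paper bypasses all of this by quoting a deterministic result of Colin de Verdi\`ere--Hubard--de Mesmay (\cref{thm_upper}): every triangulation in $\T(2n,g)$ has a separating cycle of length at most $c''\sqrt{2n/g}\log g$, which for $g_n/n\to\theta$ gives $O(\log n)$ immediately.
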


Note that in this theorem, we do not require the shortest separating cycle to be simple. In fact, even the existence of a simple separating non-contractible cycle in triangulations is a longstanding conjecture \cite{MT01}[Chapter 5]. However, we do conjecture that \whp{}, it does exist in $\bT$, and what's more that it is again of logarithmic length. We will discuss this, and other conjectures such as the convergence of $\frac{\sep}{\log(n)}$ in probability, in more details in~\cref{sec_conj}.

The upper bound in~\cref{thm_sepsys} follows directly from a theorem of~\cite{CVHM14}. To establish the upper bound, we will resort to a similar approach as in \cite{Louf,BCL}: cut along cycles and estimate the number of triangulations with boundaries, and then  use a first moment method. However, contrary to previous works, we are dealing this time with non simple cycles, with can become quite intricate, especially as topology is involved. This is one of the main challenges of this work, and it is largely dealt with in~\cref{sec_geom}.

In the process of proving the lower bound, we can adapt some of the techniques to establish a new enumerative result on high genus triangulation that we call the convergence of the \emph{genus ratio}. Let us give more context. Let $\tau(n,g)$ be the number of triangulations of genus $g$ with $2n$ faces, we established in~\cite{BL19}, together with Budzinski, the convergence of the "\emph{size ratio}"
\begin{equation}\label{eq_ratio_size}
\frac{\tau(n-1,g_n)}{\tau(n,g_n)}\to \lambda(\theta),
\end{equation}
as $\nto$, where $\lambda$ is an explicit function.

It is a natural question to ask for a similar result when the genus varies and the size is fixed. We answer it with the following theorem:

\begin{theorem}\label{thm_ratio}
There exists an explicit function $\psi$ (see~\cref{eq_psi}) such that
\begin{equation} 
\frac{n^2\tau(n,g_n-1)}{\tau(n,g_n)}\to \psi(\theta),
\end{equation}
as $\nto$.
\end{theorem}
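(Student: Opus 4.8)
The plan is to reduce \cref{thm_ratio} to a statement about triangulations with boundary, mirroring the strategy that produced \eqref{eq_ratio_size} in \cite{BL19}. The ratio $\tau(n,g_n-1)/\tau(n,g_n)$ compares two triangulations of the same size but differing genus, so the natural combinatorial operation is one that decreases the genus by $1$ at the cost of opening up a handle. Concretely, in a triangulation $T$ of genus $g$ I would pick a non-contractible non-separating cycle $\gamma$ whose homology class, together with a second cycle crossing it, spans a handle, cut along $\gamma$ to create a triangulation of genus $g-1$ with two boundary components, and then contract or cap these boundaries in a controlled way to land back in the class of triangulations of genus $g-1$ with $2n$ faces (up to a bounded correction in the size, handled as in \cite{BL19} via the already-known size ratio \eqref{eq_ratio_size}). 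The factor $n^2$ in the statement should arise precisely from the number of ways to re-glue / mark the pair of boundary vertices created by the cut, so the whole scheme is an \emph{injection-up-to-polynomial-multiplicity} between the two families.

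The key steps, in order, would be: (i) set up the relevant enumeration of triangulations of genus $g$ with a fixed number of boundary components of prescribed perimeters, and record the Tutte/loop-equation-type recursions or the bijective estimates from \cite{BL19,BCL} that control them; (ii) define precisely the genus-reduction surgery and its inverse (gluing a handle), checking that the inverse map can be performed in a number of ways that is asymptotically $n^2$ times a lower-order factor, with the dominant contribution coming from short handles — this is where one must argue, using the local-limit results \cite{BL19,BL20} or a direct first/second moment estimate, that a typical high-genus triangulation has $\Theta(g)$ ``separable handles'' of bounded size and that their contribution dominates; (iii) translate the surgery into an exact or asymptotic identity relating $\tau(n,g_n-1)$ and $\tau(n,g_n)$ through generating functions of triangulations with boundary; (iv) extract the limiting constant by plugging in the asymptotics of those boundary generating functions, which are governed by the same exponential-growth constant as in \eqref{eq_ratio_size}, and define $\psi(\theta)$ accordingly in \eqref{eq_psi}; (v) verify that the error terms are $o(1)$ uniformly in the regime $g_n/n \to \theta \in (0,1/2)$.

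The main obstacle I anticipate is step (ii): controlling the \emph{inverse} of the genus-reduction surgery. Cutting along a cycle is easy, but re-gluing to increase the genus requires identifying two boundary segments in a way that produces a genuine triangulation (no illegal identifications, correct face count, correct orientability), and one must count these regluings precisely enough to pin down the constant, not merely the logarithmic order. Unlike in \cite{Louf,BCL}, where a first-moment bound on cycle counts suffices, here an asymptotic equality is needed, so the combinatorial bijection has to be essentially exact up to $1+o(1)$ factors. I expect this to force a careful analysis of the ``diagonal'' boundary configurations (perimeter-$O(1)$ handles attached at a single vertex, akin to the \fe{}/\te{} dichotomy used elsewhere in the paper), together with a concentration argument showing that longer handles and other configurations are negligible. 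The enumerative inputs — asymptotics of $\tau$ for triangulations with a bounded number of short boundaries — should follow from \cite{BL19} and the methods of \cref{sec_geom}, but assembling them into the clean constant $\psi(\theta)$ will be the delicate part.
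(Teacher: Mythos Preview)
Your surgical strategy is plausible in spirit but takes a much harder road than the paper, and the obstacle you flag in step~(ii) is genuine and unresolved. To pin down the constant $\psi(\theta)$ rather than just the order $n^{-2}$, your cut-and-glue scheme would need to be asymptotically exact: you would have to show that the number of ways to insert a handle into a genus-$(g_n-1)$ triangulation is $(1+o(1))\,\psi(\theta)^{-1} n^2$, which in turn requires tight second-moment control on the number of short non-separating cycles in $\bT$. Nothing in the paper (nor in \cite{BL19,BCL}) supplies that level of precision; the cycle-counting arguments there give first-moment upper bounds only.

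The paper bypasses all of this with a purely enumerative argument: it feeds the exact Goulden--Jackson recursion (\cref{GJ}) back into itself. The recursion already contains the term $4n(3n-2)(3n-4)\tau(n-2,g-1)$, which relates $\tau(n,g)$ to $\tau(n-2,g-1)$ directly, so the only work is to show that the quadratic sum $\sum \tau(i,g_1)\tau(j,g_2)$ contributes negligibly unless one of the genera is zero and the corresponding size is $O(\log n)$. That is handled by \cref{lem_asympto_cut} and the short \cref{lem_asympto_cut_2}. Dividing through by $\tau(n,g_n)$ and using the known size ratio~\eqref{eq_ratioo} then isolates $n^2\tau(n,g_n-1)/\tau(n,g_n)$ and gives the explicit constant~\eqref{eq_psi} in terms of $\lambda(\theta)$ and the planar generating series $H(x)=\sum_k(3k+2)\tau(k,0)x^k$. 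No surgery, no boundaries, no concentration --- the recursion does the combinatorics for you.
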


\paragraph{Acknowledgments:} Thanks to Thomas Budzinski, Guillaume Chapuy, Andrew Elvey-Price, Wenjie Fang and Michael Wallner for numerous joint works and enriching discussions about high genus geometry and enumeration. Thanks also to Arnaud de Mesmay and Hugo Parlier for interesting exchanges around \cref{thm_upper,conj_Barnette,conj_decomp}.

\section{Definitions and notations}
\subsection{Maps}
A \emph{map} $m$ is a finite graph (allowing loops and multiple edges) embedded on a compact, connected, oriented surface, considered up to homeomorphism. The connected components of the complement of the graph on the surface are called the \emph{faces} of $m$. Equivalently, a map can be constructed as a surface obtained by gluing a collection of polygons along their sides. The \emph{genus} of a map is the genus of its underlying surface.

We consider \emph{rooted} maps, that is, maps with a distinguished oriented edge called the \emph{root edge}. The face to the right of the root edge is the \emph{root face}, and the vertex at the origin of the root edge is the \emph{root vertex}.

The \emph{degree} of a face in a map is the number edge-sides that are incident to it. A \emph{triangulation} is a rooted map in which all faces have degree $3$. For integers $n \ge 1$ and $g \ge 0$, we denote by $\T(2n,g)$ the set of triangulations of genus $g$ with $2n$ faces, and $\tau(n,g)$ its cardinality. Finally, let $\mathbf{T}_{2n,g}$ a random uniform element of $\T(2n,g)$.

A \emph{triangulation with boundaries} is a map in which all faces have degree $3$ except for $k$ distinguished faces called the \emph{boundaries}. The boundaries must be simple (i.e., having the same number of vertices as edges), and disjoint in the sense that there is no vertex that is incident to two or more boundaries. Finally, for each boundary an edge incident to it is marked and called a root. We denote by $\T_{p_1,…,p_k}(m,g)$ the set of triangulations of genus $g$ with $m$ faces and $k$ boundaries, with the $i$-th boundary of size $p_i$.
\subsection{Paths and cycles}
In a map $m$, a \emph{path} $p$ of length $\ell$ is the data of a finite sequence of oriented edges $(\vec{e}_i)_{1 \leq i \leq \ell}$ such that for each $1\leq i<\ell$, the endpoint of $\vec{e}_i$ is the starting point of $\vec{e}_{i+1}$. Given a path $p=(\vec{e}_i)_{1 \leq i \leq \ell}$, we call $v_i$ the endpoint of $\vec{e}_i$, and $v_0$ the starting point of $\vec{e}_1$. We will also say that $v_0$ is the starting point of $p$, and $v_\ell$, its endpoint.

A path $c$ is called a \emph{cycle} if $v_0=v_\ell$. A path $p$ is \emph{simple} if all its $v_i$'s are distinct, and a cycle $c$ is simple if all its $v_i$'s are distinct expect for $v_0=v_\ell$.

Given two paths $p$ and $q$ such that the endpoint of $p$ is the starting point of $q$, we write $pq$ for the concatenated path. We also write $p\inv$ for the path obtained by "reversing" $p$. (Note that this is only a "homotopic inverse", i.e. if $p$ is non-empty, $pp\inv$ is not an empty path!)

\subsection{Homotopy and homology}

Any path (resp. cycle) of $t$ can be seen as an oriented (resp. closed) curve on a surface $S$, and hence we can study their \emph{homotopy} and \emph{homology}. Let us recall some basic facts, we refer to~\cite{Gib} for more details. Two curves on a $S$ are said to be \emph{homotopic} if one can be continuously deformed into the other. A curve is said to be \emph{contractible} (or null-homotopic) if it is homotopic to a single point. Homotopy classes of closed curves on a surface form a group (by concatenation), and the abelianization of this group is called the (first) homology group of the surface. This defines an equivalence relation of homology between closed curves. A curve is said to be \emph{separating} (or null-homologous) if it is homologous to a contractible curve. The simplest example of a non-contractible but separating curve is a simple curve that, when cut, separates the surface into two subsurfaces both of positive genus, but non simple curves provide many more examples\footnote{for instance, if $\gamma$ and $\mu$ are two non homotopic non separating curves that intersect transversely at a single point, then $\gamma\mu\gamma\inv\mu\inv$ provides such an example}. 

Finally, the \emph{separating systole} of a triangulation $t$ is the length of its shortest non-contractible but separating cycle (abbreviated later as "\sncc{}"). We will write $\sep$ for the separating systole of $\bT$.

\section{Preliminaries: enumeration}\label{sec_enum}
In this section, we present all the enumerative tools that we will need for the proof of the main theorem. Several of them already exist and we recall them, but we also need to prove some more specific results. As a byproduct, we obtain \cref{thm_ratio}.

\subsection{Previous results}

We start with an exact recurrence formula for triangulations, the \emph{Goulden--Jackson formula}~\cite{GJ08}.

\begin{theorem}[Goulden--Jackson formula]\label{GJ}
\begin{multline*}
(n+1)\tau(n,g)= 4n(3n-2)(3n-4)\tau(n-2,g-1)+4(3n-1)\tau(n-1,g)\\
+4\sum_{\substack{i+j=n-2\\i,j\geq 0}}\sum_{\substack{g_1+g_2=g\\g_1,g_2\geq 0}}  (3i+2)(3j+2)\tau(i,g_1)\tau(j,g_2)+2\mathbbm{1}_{n=g=1}.
\end{multline*}
\end{theorem}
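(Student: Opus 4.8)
This is the Goulden--Jackson recurrence, whose natural proof (the route of~\cite{GJ08}) runs through the theory of integrable hierarchies. The starting point is the classical fact that the free energy of the formal one-Hermitian-matrix model --- equivalently, the generating series $F(N;p_1,p_2,p_3,\dots)=\sum_{m}\frac{N^{2-2g(m)}}{|\operatorname{Aut}(m)|}\prod_{i\geq1}p_i^{n_i(m)}$ over connected maps $m$, graded by the genus $g(m)$ and by the numbers $n_i(m)$ of faces of degree $i$ --- is, as a function of the time variables $p_i$, the logarithm of a tau function of the KP hierarchy. Hence $F$ satisfies the first KP equation; in Hirota form $(D_{p_1}^4-4D_{p_1}D_{p_3}+3D_{p_2}^2)\,\tau\!\cdot\!\tau=0$, it reads, for $F=\log\tau$ and in the standard normalisation,
\[
 F_{p_1p_1p_1p_1}+6\,(F_{p_1p_1})^2-4\,F_{p_1p_3}+3\,F_{p_2p_2}=0 .
\]
Each derivative $\partial/\partial p_i$ has a combinatorial meaning: it marks a face of degree $i$ together with a corner of it, i.e. opens that face into a rooted boundary of perimeter $i$; and the quadratic term $(F_{p_1p_1})^2$ produces \emph{ordered pairs} of maps, which is the origin of the convolution sum on the right-hand side of the recurrence.

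The second step is to pass from general maps to triangulations, and this is where the care lies: the derivatives in $p_1,p_2,p_3$ must be taken \emph{before} setting $p_i=0$ for $i\neq3$ and renaming $p_3$ as the face-counting variable $z$. A rooted triangulation of genus $g$ with $2n$ faces has $3n$ edges, and under this specialisation --- after the usual normalisation by the rooting/automorphism factor and the standard rescaling of the KP times --- $F$ reduces to the triangulation series $\sum_{n,g}\tau(n,g)\,z^{2n}N^{2-2g}$, while each differentiated term becomes a generating series of triangulations carrying a few extra ``short'' boundaries (of perimeter one or two). Reinterpreting the surgeries that close up or merge these auxiliary boundaries, one reads off precisely the three families appearing on the right: a genus-dropping handle contraction producing $4n(3n-2)(3n-4)\,\tau(n-2,g-1)$, a genus-preserving move producing $4(3n-1)\,\tau(n-1,g)$, and a disconnecting split producing $4\sum_{i+j=n-2}\sum_{g_1+g_2=g}(3i+2)(3j+2)\,\tau(i,g_1)\tau(j,g_2)$; the polynomial prefactors are the combinatorial weights of the corner- and edge-choices involved, combined with the numerical coefficients from the KP equation, and the additive $2\,\mathbbm{1}_{n=g=1}$ is a low-degree boundary correction for the unique genus-one triangulation with two faces. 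Matching the coefficient of $z^{2n}N^{2-2g}$ on the two sides and clearing denominators gives the stated identity.

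The main obstacle is not the hierarchy itself but the bookkeeping forced by the specialisation: since the KP equation has to be differentiated in $p_1,p_2,p_3$ before the remaining times are set to zero, one is obliged to work transiently with auxiliary families of triangulations carrying a few extra short (perimeter one and two) boundaries, and the crux is to verify that every spurious contribution cancels and that the survivors come out with exactly the integer prefactors above --- a delicate but purely mechanical coefficient extraction. A conceptually different route, which would bypass KP entirely, is a direct combinatorial proof in the spirit of the deletion/slicing bijections of Carrell--Chapuy and their later refinements: starting from a rooted triangulation, mark a ``trisection'' near the root, perform a local surgery on the root edge, and classify the outcome according to whether it decreases the genus by one, preserves it, or disconnects the surface into two triangulations of complementary sizes and genera --- matching the three terms. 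There the difficulty moves to engineering the surgery so that it respects the $3$-valence constraint and is a genuine bijection onto the right-hand-side objects, degenerate small cases included.
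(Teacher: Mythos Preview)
The paper does not prove this statement: it is quoted as a known result from~\cite{GJ08} and used as a tool in \cref{sec_enum}, so there is no in-paper proof to compare against.

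Your sketch is a faithful high-level summary of the route actually taken in~\cite{GJ08}: the map generating function is a KP tau function, the first Hirota equation is specialised to the cubic slice, and the surviving terms are identified combinatorially. Two small caveats. First, what you wrote is an outline, not a proof: the ``purely mechanical coefficient extraction'' you allude to is exactly where the work lives, and none of it is carried out here --- in particular the matching of the polynomial prefactors $4n(3n-2)(3n-4)$, $4(3n-1)$, $(3i+2)(3j+2)$ and the boundary term $2\,\mathbbm{1}_{n=g=1}$ is asserted rather than checked. Second, your description of $\partial/\partial p_i$ as ``marking a face of degree $i$ together with a corner'' is slightly off in normalisation (it marks a face, and the corner/rooting factors enter through the passage between labelled and rooted counts); this is harmless at the level of a sketch but would need to be tracked carefully to recover the exact integer coefficients. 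The alternative bijective route you mention (\`a la Carrell--Chapuy) is indeed known for quadrangulations and general maps, but to my knowledge a direct bijective proof of the \emph{triangulation} recurrence in this precise form has not appeared, so that paragraph should be read as a plausible program rather than an available alternative.
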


Then, we have a non asymptotic inequality between triangulations with and without boundaries. The proof is elementary, and can be found for instance in~\cite{BCL}.

\begin{lemma}\label{lem_filling_boundaries}
Let $k,m \geq 1$ and $g \geq 0$. For any $k$-uple $p_1,…,p_k$, writing $p=p_1+…+p_k$, we  have the inequality
\[|\mathcal{T}_{p_1,…,p_k}(m,g)|\leq (3m+3p)^{k-1}\tau\left(\frac{m+p}2,g\right).\]
\end{lemma}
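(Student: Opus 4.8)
The plan is to produce an explicit injection $\Phi$ from $\mathcal{T}_{p_1,\dots,p_k}(m,g)$ into the set of closed genus-$g$ triangulations with $m+p$ faces, each decorated by an ordered $(k-1)$-tuple of oriented edges, and then to bound the size of the target set by counting. We may assume $m+p$ is even, as a closed triangulation has an even number of faces, so $\mathcal{T}_{p_1,\dots,p_k}(m,g)=\emptyset$ otherwise and the bound is trivial. Given $t\in\mathcal{T}_{p_1,\dots,p_k}(m,g)$ with boundary roots $\vec e_1,\dots,\vec e_k$ (with $\vec e_i$ marking the $i$-th boundary), I would \emph{fill in} each boundary: the $i$-th boundary is a simple $p_i$-gon face, which I replace by a \emph{cone}, i.e.\ I insert a single new vertex $c_i$ inside that face and join it by an edge to each of its $p_i$ incident vertices, cutting the face into $p_i$ triangles. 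Since the boundaries are simple and pairwise vertex-disjoint, these $k$ surgeries are independent and the result $\hat t$ is a genuine triangulation; its underlying surface is unchanged, so $\hat t$ has genus $g$, and as each cone adds $p_i$ triangular faces, $\hat t$ has $m+p$ faces (with $m$ denoting the number of triangular faces of $t$, the $k$ boundaries being counted separately). I then root $\hat t$ at $\vec e_1$, now an ordinary oriented edge, and record the ordered tuple $(\vec e_2,\dots,\vec e_k)$ of oriented edges of $\hat t$, setting $\Phi(t)=(\hat t,\vec e_2,\dots,\vec e_k)$.

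The crucial step is to show $\Phi$ is injective, which I would do by exhibiting its inverse on the image. The point is that the cone centers are recoverable from the marked edges: by the rooting convention the $i$-th boundary face of $t$ sits on a fixed side of $\vec e_i=(u_i,v_i)$, so in $\hat t$ the face on that side of $\vec e_i$ is a cone triangle $\{u_i,v_i,c_i\}$, and $c_i$ is read off as its third vertex. Deleting $c_i$ together with the $p_i$ triangles incident to it reopens the $i$-th boundary, rooted at $\vec e_i$; here simplicity of the boundary is exactly what makes the link of $c_i$ a simple cycle, so the deletion yields an honest simple $p_i$-gon face. Carrying this out for every $i$ — legitimate because the $c_i$ are distinct new vertices and the boundaries, hence their cones, are disjoint — reconstructs $t$ from $\Phi(t)$. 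Thus $\Phi$ is injective, and since a triangulation with $m+p$ faces has exactly $3(m+p)=3m+3p$ oriented edges, there are at most $(3m+3p)^{k-1}$ possible decorating tuples for a given $\hat t$. Hence $|\mathcal{T}_{p_1,\dots,p_k}(m,g)|\le (3m+3p)^{k-1}\,\tau\!\left(\tfrac{m+p}{2},g\right)$.

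The statement being elementary, there is no deep obstacle; the only point needing care is the reconstruction, where the hypotheses that the boundaries are \emph{simple} and \emph{pairwise disjoint} are used precisely to make each cone cleanly excisable and the excisions mutually non-interfering, and where one must respect the rooting/orientation conventions so that each $c_i$ is unambiguously determined by $\vec e_i$ (in particular, rooted triangulations have no nontrivial automorphisms, so the oriented edges $\vec e_2,\dots,\vec e_k$ of the rooted map $\hat t$ are well defined). The mildly degenerate cases $p_i\in\{1,2\}$, where a ``cone'' is a string of one or two triangles with a repeated vertex (permissible since loops and multiple edges are allowed), go through verbatim.
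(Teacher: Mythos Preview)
Your proof is correct and is precisely the ``filling the boundaries'' argument the paper has in mind (the lemma's name is a giveaway, and the cited proof in \cite{BCL} proceeds by the same cone-filling injection). The only minor point is that the paper defines a boundary root as ``an edge incident to it'' rather than explicitly an oriented edge; this is harmless, since you can canonically orient each $e_i$ so that the boundary lies on its right, after which your reconstruction of $c_i$ as the third vertex of the root-face triangle goes through verbatim.
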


We now turn to enumeration results that are more specific to the high genus regime.
First, we have some exact results
\begin{theorem}
As $\nto$, we have
\begin{equation}\label{eq_ratioo}
\frac{\tau(n-1,g)}{\tau(n,g)}=\lambda(g/n)+o(1)
\end{equation}
and
\begin{equation}\label{eq_asymptoo}
\tau(n,g)=n^{2g}\exp(nf(g/n)+o(n))
\end{equation}
where $\lambda$ is a strictly decreasing function with $\lambda(1/2)=0$, $f$ is a concave function and the small $o$'s are uniform for $g\in\left[0,\frac{n+1}{2}\right]$.
\end{theorem}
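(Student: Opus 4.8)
This is the central enumerative estimate of the high-genus theory, proved with Budzinski in~\cite{BL19}; the only ingredients at hand are the Goulden--Jackson recurrence (\cref{GJ}) and the elementary inequality \cref{lem_filling_boundaries}. Write $r_{n,g}:=\tau(n-1,g)/\tau(n,g)$ and $\phi(n,g):=\tfrac1n(\log\tau(n,g)-2g\log n)$, so that \eqref{eq_ratioo} asks for $r_{n,g}\to\lambda(g/n)$ and \eqref{eq_asymptoo} for $\phi(n,g)\to f(g/n)$, both uniformly in $g\in[0,(n+1)/2]$. The first stage is a pair of crude bounds: keeping only the nonnegative term $4n(3n-2)(3n-4)\tau(n-2,g-1)$ of \cref{GJ} and iterating downward builds the genus up two units at a time from the planar case, giving $\tau(n,g)\ge c^n n^{2g}$; bounding \emph{every} term of \cref{GJ} from above — the convolution term via \cref{lem_filling_boundaries} (a pair of rooted triangulations glued along their roots is, after summing boundary lengths, counted by a triangulation with boundaries) — gives $\tau(n,g)\le C^n n^{2g}$. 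In particular $(\phi(n,g))$ is bounded.

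Second, \eqref{eq_asymptoo}. Since $(3i+2)(3j+2)\tau(i,g_1)\tau(j,g_2)$ is one of the nonnegative summands of \cref{GJ} applied at $(i+j+2,\,g_1+g_2)$, one has $\tau(i,g_1)\tau(j,g_2)\le\frac{i+j+3}{4(3i+2)(3j+2)}\tau(i+j+2,\,g_1+g_2)$, i.e.\ $\log\tau$ is superadditive in $(n,g)$ up to an $O(\log n)$ error. Combined with the boundedness of $(\phi(n,g))$ this gives, by a Fekete-type argument, the existence of $f(\theta)=\lim\phi(n,g_n)$ whenever $g_n/n\to\theta$; the concavity of $f$ and the uniformity in $g$ follow as in \cite{BL19}. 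At this point one still cannot read off $\lambda$: the error in \eqref{eq_asymptoo} is a genuine $o(n)$, and $\log r_{n,g}=\phi(n-1,g)-\phi(n,g)+2g\log\tfrac{n-1}{n}$, so the difference $\phi(n-1,g)-\phi(n,g)$ is controlled only up to $o(1)$, i.e.\ up to $o(n)$ before dividing by $n$.

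Third — and this is the crux — \eqref{eq_ratioo}, which must be bootstrapped directly out of \cref{GJ}. Dividing \cref{GJ} at $(n,g)$ by $\tau(n,g)$ and expressing every ratio in terms of the $r_{\bullet,\bullet}$ and the rescaled genus ratios $b_{n,g}:=n^2\tau(n,g-1)/\tau(n,g)$ (which are $O(1)$, by comparing the $\tau(n-2,g-1)$-term of \cref{GJ} with its left-hand side), one identifies the $\Theta(n)$ contribution of each term: the $\tau(n-1,g)$-term contributes $12\,r_{n,g}\,n+o(n)$, the $\tau(n-2,g-1)$-term contributes $36\,r_{n-1,g-1}r_{n,g-1}\,b_{n,g}\,n+o(n)$, and the convolution term contributes $24\,r_{n,g}^2\,T(r_{n,g})\,n+o(n)$, where $T(x)=\sum_{i\ge0}(3i+2)\tau(i,0)x^i$ is the explicit (and, at the relevant point, convergent) weighted generating series of planar triangulations. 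The delicate step is the convolution: one shows it concentrates on the trivial splits $\{g_1=0\}\cup\{g_2=0\}$, the interior contribution being $o(n)$, using the concavity of $f$ (via Jensen, $if(g_1/i)+jf(g_2/j)\le nf(g/n)+O(1)$), the entropy factor $\tfrac{i^{2g_1}j^{2g_2}}{n^{2g}}=e^{-2gH(g_1/g)}$, the constraints $i\ge2g_1-1$, $j\ge2g_2-1$, and classical fixed-genus asymptotics for the pieces with one genus bounded — the same analysis that reappears in the proof of \cref{thm_ratio}. One also has to extract from the recurrence that $r_{n,g}$ varies slowly, $|r_{n,g}-r_{n-1,g}|=o(1)$, so that the various $r$'s appearing share a common limit along a subsequence. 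Letting $\overline\lambda,\underline\lambda$ and $\overline b,\underline b$ be the limsup/liminf of $r_{n,g},b_{n,g}$ along $g/n\to\theta$, the resulting relation $1=36\lambda^2 b+12\lambda+24\lambda^2T(\lambda)+o(1)$, applied in both directions together with monotonicity in $\theta$ and the boundary values ($\lambda(0)=$ the planar growth constant, $\lambda(1/2)=0$ from the near-maximal-genus estimates), forces all these subsequential limits to coincide — the limiting functional equation has a unique admissible solution. Hence $r_{n,g}\to\lambda(g/n)$ uniformly, with $\lambda$ strictly decreasing and $\lambda(1/2)=0$.

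The main obstacle is the third stage: the $o(n)$ error in \eqref{eq_asymptoo} is too lossy to yield the ratio, so convergence of $r_{n,g}$ must be pulled out of the Goulden--Jackson recurrence itself, and the technical heart of that is the uniform control of the convolution term — its concentration on the trivial splits — together with the uniqueness of the solution of the limiting functional equation.
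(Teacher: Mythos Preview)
The paper does not prove this theorem: it is listed under ``Previous results'' in \cref{sec_enum}, and the paper explicitly states that \eqref{eq_ratioo} and \eqref{eq_asymptoo} were proved in~\cite{BL19}, with the concavity of $f$ added later in~\cite{BCL}. There is therefore no in-paper proof to compare your proposal against.

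That said, your sketch is a reasonable high-level outline of the strategy of~\cite{BL19}, and you correctly single out the third stage (extracting the ratio directly from Goulden--Jackson, and controlling the convolution) as the crux. One point is genuinely oversimplified, though: after dividing \cref{GJ} by $\tau(n,g)$ you end up with \emph{two} unknown limiting quantities, $\lambda$ and $b$ (your $b_{n,g}=n^2\tau(n,g-1)/\tau(n,g)$), and a \emph{single} relation $1=36\lambda^2 b+12\lambda+24\lambda^2 T(\lambda)$. Your appeal to ``monotonicity in $\theta$'' and boundary values does not, by itself, force uniqueness of $(\lambda,b)$ from one equation in two unknowns; in~\cite{BL19} the identification of $\lambda$ is obtained by a substantially longer argument (tightness of local neighbourhoods, subsequential local limits, and a further recursion/peeling-type input that supplies the missing constraint). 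Indeed, the present paper uses exactly this relation in the \emph{other} direction: once $\lambda(\theta)$ is known from~\cite{BL19}, the Goulden--Jackson identity is solved for $b$ to obtain \cref{thm_ratio}. So your plan is on the right track but, as written, the closing uniqueness step has a real gap.
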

This was proved in~\cite{BL19}, except for the concavity of $f$ which was proven in \cite{BCL}. In that same paper, a general asymptotic high genus inequality was proven.

\begin{proposition}\label{prop_asympto_ratio}
For all $\theta \in \left( 0,\frac{1}{2} \right)$, there is a constant $a_{\theta} \in (0,1)$ such that the following holds. Let $(g_n)$ be a sequence such that $0 \leq g_n \leq \frac{n+1}{2}$ for every $n$ and $\frac{g_n}{n} \to \theta$. For all integers $m$ and $h$ satisfying $0\leq m\leq \frac{n}{2}$ and $0\leq h\leq \min \left( \frac{g_n}{2}, \frac{m+1}{2} \right)$, we have
\[
\frac{\tau(n,g_n)}{\tau(n-m,g_n-h)}\geq a_{\theta}^{h}\frac{n^{2g_n}}{(n-m)^{2(g_n-h)}}\exp \left( f\left(\frac{g_n}{n}\right) n -f\left(\frac{g_n-h}{n-m}\right)(n-m)+ o(m) \right),
\]
where the $o(m)$ is uniform in $(m,h)$ as $n \to +\infty$ (that is, it is bounded by $m\eps(n)$ with $\eps(n) \to 0$ as $n \to +\infty$).
\end{proposition}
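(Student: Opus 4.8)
The plan is to build the inequality by composing one-step estimates that lower the genus (decreasing $(n,g)$ by $(2,1)$) and that lower the size at fixed genus (decreasing $(n,g)$ by $(1,0)$), keeping careful track of the multiplicative constants and of the error terms. For the genus-lowering step I keep, on the right-hand side of the Goulden--Jackson recursion (\cref{GJ}), only the first term --- all the others being non-negative:
\[
\tau(n,g)\ \ge\ \frac{4n(3n-2)(3n-4)}{n+1}\,\tau(n-2,g-1)\ =\ 36\,n^{2}\bigl(1+\LandauO(1/n)\bigr)\,\tau(n-2,g-1)\qquad(n\ge 2,\ g\ge 1);
\]
for the size-lowering step, \eqref{eq_ratioo} gives $\tau(n,g)=\bigl(\lambda(g/n)+\Landauo(1)\bigr)^{-1}\tau(n-1,g)$ uniformly. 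Throughout I write $N:=n-2h$ and $G:=g_n-h$.

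Applying the genus-lowering inequality $h$ times takes $(n,g_n)$ down to $(N,G)$ (legitimate in the range $m\le n/2$, $2h\le m+1$), and along the way the ratios $\tfrac{g_n-j}{n-2j}$ are decreasing in $j$, lie below $g_n/n\to\theta$, and --- because $h\le g_n/2$ --- stay above $\approx\theta/2$, so they remain in a fixed compact set $I_\theta\subset(0,1/2)$. Using the trivial identity $\tau(N,G)=\tfrac{\tau(N,G)}{\tau(n-m,G)}\,\tau(n-m,G)$, and absorbing in the single corner case $m=2h-1$ allowed by the hypotheses one extra factor $\lambda(\tfrac{G}{n-m})+\Landauo(1)$ (harmless unless $\tfrac{G}{n-m}$ is close to $1/2$, where a short direct estimate is needed), the statement reduces to two estimates whose product is precisely the claim:
\[
36^{h}\prod_{j=0}^{h-1}(n-2j)^{2}\ \ge\ a_\theta^{\,h}\,\frac{n^{2g_n}}{N^{2G}}\exp\!\Bigl(nf\bigl(\tfrac{g_n}{n}\bigr)-Nf\bigl(\tfrac{G}{N}\bigr)+\Landauo(m)\Bigr),
\]
\[
\frac{\tau(N,G)}{\tau(n-m,G)}\ =\ \frac{N^{2G}}{(n-m)^{2G}}\exp\!\Bigl(Nf\bigl(\tfrac{G}{N}\bigr)-(n-m)f\bigl(\tfrac{G}{n-m}\bigr)+\Landauo(m)\Bigr).
\]
(The genuine Goulden--Jackson product $\prod_{j}\tfrac{4(n-2j)(3(n-2j)-2)(3(n-2j)-4)}{n-2j+1}$ equals $36^{h}\prod_{j}(n-2j)^{2}$ up to a factor $e^{\Landauo(m)}$, since the corrections $1+\LandauO(1/(n-2j))$ have logarithms summing to $\LandauO(h/N)=\LandauO(m/n)=\Landauo(m)$.)

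For the first estimate, a first-order expansion built from \eqref{eq_asymptoo} shows that the predicted single $(2,1)$-step factor equals $M^{2}\,C(K/M)\,(1+\Landauo(1))$ with $C(x):=\exp\bigl(4x+2f(x)+(1-2x)f'(x)\bigr)$, so telescoping along the path reduces the estimate (up to an error absorbed into the $\Landauo(m)$) to $\prod_{j=0}^{h-1}\bigl(36/C(\tfrac{g_n-j}{n-2j})\bigr)\ge a_\theta^{\,h}$, which holds with $a_\theta:=\min\bigl(\tfrac12,\inf_{I_\theta}(36/C)\bigr)\in(0,1)$ since $C$ is continuous --- hence bounded --- on the compact set $I_\theta$. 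For the second estimate, \eqref{eq_ratioo} rewrites the left-hand side as $\prod_{s=n-m+1}^{N}\bigl(\lambda(G/s)+\Landauo(1)\bigr)^{-1}$, and using the relation $\lambda(x)=\exp\bigl(xf'(x)-f(x)-2x\bigr)$ between $f$ and $\lambda$ from \cite{BL19}, together with the same first-order expansion, each factor equals the predicted single $(1,0)$-step factor times $1+\Landauo(1)$ --- provided $\lambda(G/s)$ stays bounded below --- so telescoping over the at most $m$ values of $s$ gives the estimate.

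The step I expect to be the real obstacle is exactly this uniformity in the second estimate (and the analogous point in the corner case). Because $\lambda$ vanishes at $1/2$ and, after the reductions, the ratio $\tfrac{G}{n-m}$ can be within $\Landauo(1)$ of $1/2$ --- precisely when $\tau(n-m,G)$ is about to become $0$ --- the replacement $\lambda(\cdot)+\Landauo(1)\rightsquigarrow\lambda(\cdot)(1+\Landauo(1))$ breaks down for the last handful of factors, so one must either invoke a sharper near-boundary estimate or show that only boundedly many factors are affected and control them by hand. Turning the $\Landauo(n)$ error that \eqref{eq_asymptoo} supplies directly into the genuinely uniform $\Landauo(m)$ demanded by the statement is the technical core of the argument.
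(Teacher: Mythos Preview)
This proposition is not proved in the present paper: it is quoted in Section~\ref{sec_enum} under ``Previous results'' as a result of~\cite{BCL} (``In that same paper, a general asymptotic high genus inequality was proven''), with no argument supplied here. So there is no in-paper proof to compare your proposal against.

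That said, your strategy is a reasonable reconstruction of how such a bound could be obtained: keep only the first (genus-lowering) term of the Goulden--Jackson recursion to get $\tau(n,g)\ge 36n^2(1+O(1/n))\tau(n-2,g-1)$, iterate it $h$ times along the path $(n,g_n)\to(n-2h,g_n-h)$, and then use~\eqref{eq_ratioo} for the remaining $m-2h$ size steps at fixed genus. Your check that the intermediate ratios $(g_n-j)/(n-2j)$ are decreasing in $j$ and stay in a fixed compact subset of $(0,1/2)$ is correct, so the genus-lowering block goes through with a constant $a_\theta\in(0,1)$ as you describe.

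The obstacle you flag at the end is genuine and is not resolved by your sketch. Nothing in the hypotheses prevents $(g_n-h)/(n-m)$ from landing within $o(1)$ of $1/2$ (e.g.\ $\theta>1/4$, $h$ small, $m$ near $n/2$, subject only to $\tau(n-m,g_n-h)>0$), and near that endpoint $\lambda$ vanishes, so the replacement $\lambda(\cdot)+o(1)\rightsquigarrow\lambda(\cdot)(1+o(1))$ fails for an unbounded number of factors, not ``a handful''. Converting the $o(n)$ error of~\eqref{eq_asymptoo} into the uniform $o(m)$ required by the statement is precisely the content of the cited result, and needs input from~\cite{BCL} beyond what this paper records.
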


\subsection{Additional results}

To these existing results, let us add a couple more that will be of use in this paper.
First, as a direct consequence of \cref{eq_ratioo,GJ}, we have the following:
\begin{lemma}\label{lem_asympto_genus_ratio}
Let $h$ be a fixed integer, $C$ a constant and $|\ell|\leq C\log n$. Then 
\[\frac{\tau(n+\ell,g_n-h)}{\tau(n,g_n)}\leq n^{-2h+o(1)} (\lambda(\theta)+o(1))^{-\ell}\]
where the $o(1)$ is uniform for all $-C\log n\leq \ell\leq C\log n$.
\end{lemma}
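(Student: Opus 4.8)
The claim is that
\[
\frac{\tau(n+\ell, g_n - h)}{\tau(n, g_n)} \leq n^{-2h + o(1)} (\lambda(\theta) + o(1))^{-\ell},
\]
and I want to get this by decomposing the ratio into a part that moves the genus down (using the Goulden--Jackson recurrence) and a part that moves the size (using the size ratio \cref{eq_ratioo}). The natural strategy is to treat $h$ and $\ell$ separately: write
\[
\frac{\tau(n+\ell,g_n-h)}{\tau(n,g_n)} = \frac{\tau(n+\ell,g_n-h)}{\tau(n+\ell,g_n-h+1)}\cdots\frac{\tau(n+\ell,g_n-1)}{\tau(n+\ell,g_n)}\cdot\frac{\tau(n+\ell,g_n)}{\tau(n,g_n)},
\]
i.e. $h$ successive ``genus decrements'' at fixed size $n+\ell$, followed by a size-shift by $\ell$ at fixed genus $g_n$. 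Since $h$ is a \emph{fixed} integer, I only ever need finitely many genus decrements, so I do not need uniformity in $h$ — this is what makes the argument cheap.

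\textbf{Step 1: one genus decrement costs $n^{-2+o(1)}$.} Apply \cref{GJ} with parameters $(n+\ell, g_n-j)$ for $0\le j\le h-1$. The left-hand side is $(n+\ell+1)\tau(n+\ell,g_n-j)$. On the right-hand side, the term $4(n+\ell)(3(n+\ell)-2)(3(n+\ell)-4)\tau(n+\ell-2,g_n-j-1)$ is of order $n^3\tau(n+\ell-2,g_n-j-1)$, so in particular
\[
(n+\ell+1)\tau(n+\ell,g_n-j)\ge 4(n+\ell)(3(n+\ell)-2)(3(n+\ell)-4)\,\tau(n+\ell-2,g_n-j-1),
\]
hence $\tau(n+\ell-2,g_n-j-1)\le \tau(n+\ell,g_n-j)\cdot n^{-2}(1+o(1))$, uniformly in $|\ell|\le C\log n$ since all the polynomial factors are $n^3(1+o(1))$ and $n+\ell+1 = n(1+o(1))$. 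This shifts the size by $2$ and the genus by $1$; to return to size $n+\ell$ I then apply \cref{eq_ratioo} twice (see Step 2) to pick up a further factor $(\lambda(\theta)+o(1))^{2}$, which is a constant (bounded away from $0$, since $\theta<1/2$ so $\lambda(\theta)>0$). Chaining this $h$ times gives the $n^{-2h+o(1)}$ factor; the accumulated constant from the $2h$ applications of \cref{eq_ratioo} is $(\lambda(\theta)+o(1))^{2h}$, which since $h$ is fixed is itself absorbed into $n^{o(1)}$ — or kept as an explicit constant, either is fine.

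\textbf{Step 2: the size shift costs $(\lambda(\theta)+o(1))^{-\ell}$.} Now I need $\tau(n+\ell,g_n)/\tau(n,g_n)$. For $\ell\ge 0$ this is $\prod_{i=0}^{\ell-1}\tau(n+i+1,g_n)/\tau(n+i,g_n)$; by \cref{eq_ratioo} each factor equals $(\lambda(g_n/(n+i)) + o(1))^{-1}$. Since $g_n/(n+i) = \theta + o(1)$ uniformly over $0\le i\le \ell\le C\log n$ (because $\ell/n\to 0$) and $\lambda$ is continuous, each factor is $(\lambda(\theta)+o(1))^{-1}$, and the product of $\ell\le C\log n$ such factors is $(\lambda(\theta)+o(1))^{-\ell}$ — here I use that $(1+\delta_n)^{C\log n} = n^{o(1)}$ when $\delta_n\to 0$, which lets the per-factor $o(1)$ errors accumulate into an $n^{o(1)}$ term. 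For $\ell<0$ the same argument runs with the reciprocal product. The key point is that $|\ell|\le C\log n$ is exactly the regime where $\ell$-fold products of $(1+o(1))$ factors stay within $n^{o(1)}$.

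\textbf{Main obstacle.} The only real subtlety is bookkeeping the uniformity: I must check that every $o(1)$ produced along the way is uniform in $\ell\in[-C\log n, C\log n]$, and that raising such an error to the power $|\ell|$ keeps it $n^{o(1)}$. Concretely, the cleanest way is to first establish the pointwise-in-$i$ bound $\tau(n+i+1,g_n)/\tau(n+i,g_n) \le (\lambda(\theta))^{-1}(1+\eps(n))$ with $\eps(n)\to 0$ independent of $i$ for $|i|\le C\log n$ (combining \cref{eq_ratioo} with continuity of $\lambda$ and $g_n/n\to\theta$), then note $(1+\eps(n))^{|\ell|}\le (1+\eps(n))^{C\log n} = \exp(C\log n\cdot\log(1+\eps(n))) = n^{C\log(1+\eps(n))} = n^{o(1)}$. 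Everything else is a mechanical combination of \cref{GJ} and \cref{eq_ratioo} as above.
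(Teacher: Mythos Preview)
Your proof is correct and matches the paper's approach exactly: the paper simply states the lemma as ``a direct consequence of \cref{eq_ratioo,GJ}'' without further details, and your decomposition into $h$ genus-decrements (each handled by dropping all but the first right-hand term of \cref{GJ}, then correcting the size shift of $2$ via \cref{eq_ratioo}) followed by an $\ell$-fold size shift (handled by \cref{eq_ratioo} and continuity of $\lambda$) is precisely what the paper has in mind. One cosmetic slip: in Step~1 the factor you pick up when returning from size $n+\ell-2$ to $n+\ell$ is $(\lambda(\theta)+o(1))^{-2}$, not $(\lambda(\theta)+o(1))^{2}$; but as you note, either way it is a fixed constant and is absorbed into $n^{o(1)}$, so the argument is unaffected.
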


The next lemma  will allow us to (roughly) enumerate pairs of triangulations.

\begin{lemma}\label{lem_asympto_cut}
Let $a,b$ be two non negative integers, and $(g_n)$ as in the previous lemma.
Then
\[\sum_{n_1+n_2=n}\sum_{h_1+h_2=g_n\atop h_1,h_2\geq 1}n_1^a\tau(n_1,h_1)n_2^b\tau(n_2,h_2)\leq n^{\max(a,b)-2+o(1)}\tau(n,g_n)\]
\end{lemma}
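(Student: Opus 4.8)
We need to bound $\sum_{n_1+n_2=n}\sum_{h_1+h_2=g_n, h_1,h_2\geq 1} n_1^a \tau(n_1,h_1) n_2^b \tau(n_2,h_2)$.

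The plan: use Proposition \ref{prop_asympto_ratio} twice. Actually, the cleanest approach: for fixed $(n_1, h_1)$ with $n_2 = n - n_1$, $h_2 = g_n - h_1$, we want to bound $\tau(n_1, h_1)\tau(n_2, h_2)$ in terms of $\tau(n, g_n)$.

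Let me think of the structure. We want $\tau(n_1,h_1)\tau(n_2,h_2) \le \tau(n,g_n) \cdot (\text{something small})$. By Prop \ref{prop_asympto_ratio} with $(n, g_n) \to (n_2, h_2)$... hmm, we need $\tau(n,g_n)/\tau(n-m, g_n-h)$, so with $m = n_1$, $h = h_1$, we'd get $\tau(n,g_n)/\tau(n_2,h_2) \ge a_\theta^{h_1} \frac{n^{2g_n}}{n_2^{2h_2}}\exp(f(g_n/n)n - f(h_2/n_2)n_2 + o(n_1))$. But this requires $h_1 \le \min(g_n/2, (n_1+1)/2)$ — not always true. Need to handle the cases where $h_1$ or $n_1$ is large separately, or symmetrize.

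=== PROPOSAL ===

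The plan is to reduce everything to \cref{prop_asympto_ratio} and the estimate \cref{eq_asymptoo}, using the concavity of $f$. Fix a decomposition $n_1+n_2=n$, $h_1+h_2=g_n$ with $h_1,h_2\geq 1$; write $x_i=h_i/n_i$. By \cref{eq_asymptoo} we have
\[
n_1^a\tau(n_1,h_1)\,n_2^b\tau(n_2,h_2)= n_1^{a+2h_1}n_2^{b+2h_2}\exp\!\big(n_1 f(x_1)+n_2 f(x_2)+o(n)\big),
\]
while $\tau(n,g_n)=n^{2g_n}\exp(nf(\theta)+o(n))$. Since $f$ is concave and $n_1 x_1+n_2 x_2=g_n$, one has $n_1f(x_1)+n_2f(x_2)\leq n f(g_n/n)$, so the exponential factors only help; the entire difficulty is therefore in the polynomial prefactor $n_1^{a+2h_1}n_2^{b+2h_2}/n^{2g_n}$ and in controlling the $o(n)$ terms uniformly (here I would use that the $o(n)$ in \cref{eq_asymptoo} is uniform in the genus, so both $o(n_1)$ and $o(n_2)$ are genuinely $o(n)$). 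A subtlety: when $h_i$ is of the same order as $n_i$ — in particular when one of the $n_i$ is small, forcing $x_i$ near $1/2$ — one must check that the contribution is negligible; for $n_i$ bounded this is immediate since $\tau(n_i,h_i)$ is then bounded by a constant and $h_i$ is bounded, contributing at most a polynomial in $n$ which is absorbed, while the bulk of the sum has both $n_i\to\infty$.

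The main work is the polynomial bookkeeping. Write $P(n_1,h_1)=n_1^{a+2h_1}n_2^{b+2h_2}$ and compare to $n^{2g_n}=n^{2h_1+2h_2}$. Since $n_i\leq n$, we have $n_1^{2h_1}n_2^{2h_2}\leq n^{2h_1+2h_2}$, so $P(n_1,h_1)\leq n^{2g_n}\cdot n_1^a n_2^b \cdot (n_1/n)^{?}$; more carefully, factoring out the genus powers,
\[
\frac{n_1^{a+2h_1}n_2^{b+2h_2}}{n^{2g_n}}\leq n_1^{a}n_2^{b}\Big(\frac{n_1}{n}\Big)^{2h_1}\Big(\frac{n_2}{n}\Big)^{2h_2}\leq n_1^{a}n_2^{b}\Big(\frac{n_1}{n}\Big)^{2}\Big(\frac{n_2}{n}\Big)^{2}\cdot\Big[\Big(\tfrac{n_1}{n}\Big)^{2(h_1-1)}\Big(\tfrac{n_2}{n}\Big)^{2(h_2-1)}\Big],
\]
using $h_1,h_2\geq 1$. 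The bracketed term is at most $1$. So, up to the harmless exponential and $o(n)$ factors, each summand is bounded by $\tau(n,g_n)\cdot n_1^{a}n_2^{b}(n_1/n)^2(n_2/n)^2\cdot e^{o(n)}$, and I would then sum over $n_1$ (there are at most $n$ choices of $n_1$, and for each the number of choices of $h_1$ is at most $g_n\le n$). The worst case is $n_1$ or $n_2$ of order $n$: if $a\geq b$, take $n_1\asymp n$, $n_2\asymp 1$, giving $n_1^a n_2^b (n_1/n)^2(n_2/n)^2\lesssim n^{a}\cdot n^{-2}$, i.e. $n^{a-2}$, and symmetrically $n^{b-2}$ otherwise; hence the total is $n^{\max(a,b)-2+o(1)}\tau(n,g_n)$ after accounting for the $O(n^2)$ terms in the double sum, which is absorbed into $n^{o(1)}$ — wait, $O(n^2)$ is not $n^{o(1)}$, so I need the per-term bound to already gain two extra powers of $n$ to kill the number of summands. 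Indeed the factor $(n_1/n)^2(n_2/n)^2$ summed over $n_1$ behaves like $\sum_{n_1}(n_1/n)^2(1-n_1/n)^2\asymp n$ (a Riemann sum times $n$), and summing over $h_1$ costs the genus term; but the exponential from concavity, when $x_1\neq x_2$, is $e^{-cn}$ for some $c>0$ bounded below away from the diagonal, which crushes the polynomial count of $h_1$. So the genuine contribution comes from $h_1/n_1\approx h_2/n_2\approx\theta$, where the sum over $h_1$ is effectively over an $o(n)$-width window (this is exactly the kind of local central limit behavior controlled in \cite{BL19}); combining, the sum over $(n_1,h_1)$ contributes a further $n^{o(1)}$ rather than $n^2$.

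The step I expect to be the real obstacle is making this last point rigorous: showing that the sum over $h_1$ (for fixed $n_1$ in the bulk) contributes only $n^{o(1)}$ rather than a power of $n$, which requires either quantitative concavity of $f$ near its argument — i.e. a bound like $nf(\theta)-n_1f(x_1)-n_2f(x_2)\geq c\,n\,(x_1-\theta)^2$ away from the boundary — or else a direct argument using \cref{eq_ratioo} (as in \cref{lem_asympto_genus_ratio}) to telescope $\tau(n_1,h_1)$ down to $\tau(n_1,\lceil \theta n_1\rceil)$ paying $\lambda$-factors and powers of $n_1$, and similarly for $\tau(n_2,h_2)$, so that the product of the two telescoped quantities recombines into $\tau(n,g_n)$ up to $n^{o(1)}$. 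I would likely adopt the second, more computational route, since it avoids needing a strict-concavity modulus for $f$ and stays within the toolkit already assembled in \cref{sec_enum}. The bookkeeping of the $n^{2h_i}$ versus $n_i^{2h_i}$ factors, and checking the $o(n_i)$ errors are uniform and sum to $o(n)$, is the part that needs care but is otherwise routine.
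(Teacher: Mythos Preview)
Your approach has a genuine gap: you rely on \cref{eq_asymptoo} throughout, which carries an $e^{o(n)}$ multiplicative error, and this is too coarse to reach the polynomial target $n^{\max(a,b)-2+o(1)}$. Concretely, in the regime that actually dominates the sum --- namely $n_2$ bounded (or logarithmic) and $h_2=1$ --- your bound reads, after the concavity and polynomial bookkeeping,
\[
\frac{n_1^a\tau(n_1,h_1)\,n_2^b\tau(n_2,h_2)}{\tau(n,g_n)}\;\lesssim\; n^{a-2}\cdot e^{o(n)},
\]
and the $e^{o(n)}$ cannot be absorbed into $n^{o(1)}$. Your attempt to rescue this by saying ``the exponential from concavity \ldots\ crushes the polynomial count'' only works away from the diagonal $x_1=x_2=\theta$; near the diagonal the concavity gain is $o(n)$ as well and does not beat the error. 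Your ``second, more computational route'' via telescoping with \cref{eq_ratioo}/\cref{lem_asympto_genus_ratio} also stalls: those tools handle size shifts of at most $O(\log n)$ and genus shifts by a \emph{fixed} integer, not shifts of order $n$, so you cannot telescope $\tau(n_1,h_1)$ all the way to $\tau(n_1,\lceil\theta n_1\rceil)$ with them.

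The paper's proof fixes exactly this by replacing \cref{eq_asymptoo} with \cref{prop_asympto_ratio} whenever one side is small: that proposition bounds $\tau(n,g_n)/\tau(n-n_2,g_n-h_2)$ with error $e^{o(n_2)}$ rather than $e^{o(n)}$, which \emph{is} $n^{o(1)}$ once $n_2=O(\log n)$. The argument then splits on the size of $n_2$ (and of $h_2$): when $n_2\geq \eps n$ your concavity/entropy argument works and the contribution is exponentially small; when $n_2$ is intermediate or small one uses \cref{prop_asympto_ratio}; and the sole non-negligible contribution is isolated at $n_2=O(\log n)$, $h_2=1$, where it is handled by \cref{lem_asympto_genus_ratio} together with the convergence of $\sum_k k^b\tau(k,1)\lambda(\theta)^k$. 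In particular, the ``genuine contribution'' does not come from the bulk $h_i/n_i\approx\theta$ with both $n_i$ macroscopic (that region is exponentially suppressed by the $n_1^{2h_1}n_2^{2h_2}/n^{2g_n}$ factor alone), but from one side being tiny with genus exactly $1$ --- and that is precisely where your error term blows up.
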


\begin{proof}
Assume wlog $a\geq b$, we will show
\begin{equation}\label{eq_ineq_smaller_n2}
\sum_{n_1+n_2=n\atop n_1\geq n_2}\sum_{h_1+h_2=g_n\atop h_1,h_2\geq 1}n_1^a\tau(n_1,h_1)n_2^b\tau(n_2,h_2)\leq n^{a-2+o(1)}\tau(n,g_n)
\end{equation}
which directly implies the desired result.

We will split the analysis of the RHS of this inequality in four cases. Let $\eps=\min(\sqrt{a_\theta},\theta)$, where $a_\theta$ is the constant of \cref{prop_asympto_ratio}.

\paragraph{\textbf{Case 1:}} We consider the case $n_2\geq \eps n$. Then, by \cref{eq_asymptoo}, and in particular by concavity of $f$, we have 
\begin{align*}
\frac{n_1^a\tau(n_1,h_1)n_2^b\tau(n_2,h_2)}{\tau(n,g_n)}&\leq \frac{n_1^{2h_1+a}\exp(n_1f(h_1/n_1))n_2^{2h_2+b}\exp(n_2f(h_2/n_2))}{n^{2g_n}\exp(nf(g_n/n))}e^{o(n)}\\
&\leq \frac{n_1^{2h_1+a}n_2^{2h_2+b}}{n^{2g_n}}e^{o(n)}.
\end{align*}
But $n_1,n_2\leq (1-\eps)n$, therefore
\begin{equation}\label{eq_large_n2}
\frac{n_1^a\tau(n_1,h_1)n_2^b\tau(n_2,h_2)}{\tau(n,g_n)}\leq (1-\eps)^{2g_n}e^{o(n)}.
\end{equation}
In other words, terms where $n_2$ is too large are exponentially small.

\paragraph{}Now, we tackle the two other cases. Using \cref{prop_asympto_ratio}, once again with~\cref{eq_asymptoo}, we have that
\begin{equation}\label{eq_ratio_base}
\frac{n_1^a\tau(n_1,h_1)n_2^b\tau(n_2,h_2)}{\tau(n,g_n)}\leq a_\theta^{-h_2}\frac{n_1^{2h_1+a}n_2^{2h_2+b}}{n^{2g_n}}e^{o(n_2)}.
\end{equation}

\paragraph{\textbf{Case 2:}} Consider now the case $\frac{5+b}{\theta}\log n\leq n_2\leq \eps n$.
First, by definition of $\eps$, we have 
\[a_\theta^{-h_2}\frac{n_2^{2h_2}}{n^{2h_2}}\leq a_\theta^{-h_2}(\sqrt{a_\theta})^{2h_2}= 1.\]

Note that $h_2\leq \frac{n_2+1}{2}\leq (\theta/2+o(1)) n$ by definition of $\eps$, hence $h_1\geq (1+o(1))\frac{\theta}{2} n$ and thus
\[\frac{n_1^{2h_1}}{n^{2h_1}}=\frac{(n-n_2)^{2h_1}}{n^{2h_1}}=\exp(-(1+o(1)2h_1\frac{n_2}{n})=\exp(-\theta n_2+o(n_2))\leq n^{-5-b+o(1)}\]
where in the last inequality we used the fact that $n_2\geq \frac{5+b}{\theta}\log n$. Using  \cref{eq_ratio_base} and the two inequalities above
\begin{equation}\label{eq_medium_n2}
\frac{n_1^a\tau(n_1,h_1)n_2^b\tau(n_2,h_2)}{\tau(n,g_n)}\leq \frac{n_1^{2h_1+a}n_2^{2h_2+b}}{n^{2g}}e^{o(n_2)}\leq n^{a+b} n^{-5-b+o(1)}\leq n^{-5+a+o(1)}
\end{equation}

\paragraph{\textbf{Case 3:}} We're almost done, take now $n_2\leq \frac{5+b}{\theta}\log n$ and $h_2\geq 2$.
By \cref{eq_ratio_base}, we have
\begin{equation}\label{eq_small_n2}
\frac{n_1^a\tau(n_1,h_1)n_2^b\tau(n_2,h_2)}{\tau(n,g_n)}\leq n^{a+o(1)} \left(\frac{(5+b)\log n}{a_\theta n}\right)^{2h_2}\leq n^{a+o(1)}\left(\frac{(5+b)\log n}{a_\theta n}\right)^{4}=n^{a-4+o(1)}
\end{equation}

\paragraph{}Note that in the RHS of\cref{eq_ineq_smaller_n2}, there is less than $n^2$ terms. Therefore, combining \cref{eq_large_n2,eq_medium_n2,eq_small_n2}, we obtain that
\[\sum_{n_1+n_2=n\atop n_1\geq n_2}\sum_{h_1+h_2=g_n\atop h_1\geq 1,h_2\geq 2}n_1^a\tau(n_1,h_1)n_2^b\tau(n_2,h_2)= \sum_{n_1+n_2=n\atop n_2\leq (5+b)\log n}n_1^a\tau(n_1,g_n-1)n_2^b\tau(n_2,1)+o(n^{a-2+o(1)}\tau(n,g_n)).\]

\paragraph{\textbf{Case 4:}} To show~\cref{eq_ineq_smaller_n2} (and thus finish the proof of the lemma), it remains to show that 
\[\sum_{n_1+n_2=n\atop n_2\leq (5+b)\log n}n_1^a\tau(n_1,g_n-1)n_2^b\tau(n_2,1)\leq n^{a-2+o(1)}\tau(n,g_n)).\]
But, on the one hand, by \cref{lem_asympto_genus_ratio}, uniformly for all $n_2\leq (5+b)\log n$, we have
\[\frac{\tau(n-n_2,g_n-1)}{\tau(n,g_n-1)}\leq  n^{-2+o(1)}\lambda(\theta)^{n_2}.\]
And on the other hand, it is well-known~(see for instance~\cite{Kri07}) that the generating series
\[H_b(x)=\sum{n\geq 0} n^b\tau(n,1)x^n\]
has radius of convergence $\lambda(0)>\lambda(\theta)$, therefore $H_b(\lambda(\theta))<\infty$ and
\[\sum_{n_1+n_2=n\atop n_2\leq (5+b)\log n}n_1^a\tau(n_1,g_n-1)n_2^b\tau(n_2,1)\leq n^{a-2+o(1)}\sum_{k\geq 0}\lambda(\theta)^kk^b\tau(k,1)=n^{a-2+o(1)}H_b(\lambda(\theta)).\]
This finishes the proof.
\end{proof}

\subsection{The genus ratio: proof of~\cref{thm_ratio}}

Thanks to the results of the previous subsection, we have (almost) everything we need to prove~\cref{thm_ratio}. The only additional ingredient we need is similar to~\cref{lem_asympto_cut}, which roughly says that in the double sum of the Goulden--Jackson recurrence formula, only the terms where $g_1=0$ or $g_2=0$ contribute. The following lemma shows that, on top of that, only the terms where $n_1$ or $n_2$ is very small contribute. More precisely:

\begin{lemma}\label{lem_asympto_cut_2} As $\nto$ with $\frac{g_n}{n}\to\theta\in(0,1/2)$,
\[\sum_{n_1+n_2=n\atop n_2\geq \log(n)}(3n_1+2)\tau(n_1,g_n)(3n_2+2)\tau(n_2,0)=o(\tau(n,g_n))\]
\end{lemma}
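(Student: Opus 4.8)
The plan is to estimate the sum by splitting the range of $n_2$ into two regimes: a "large genus-0 piece" regime where $n_2$ is a positive proportion of $n$, and an "intermediate" regime $\log n \le n_2 \le \eps n$. The key tool is that $\tau(n_2,0)$, the number of planar triangulations, grows like $C\, 12^{n_2} n_2^{-5/2}$ (Tutte's formula), so in particular it has sub-exponential-in-$n$ growth with a fixed exponential rate $12 = 1/\lambda(0)$ — crucially $\lambda(0) > \lambda(\theta)$. Meanwhile, removing $n_2$ triangles from a genus-$g_n$ triangulation (keeping the genus) costs a factor $\lambda(\theta)^{-n_2}$ up to subexponential corrections, by \cref{eq_ratioo}. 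Since $\lambda(\theta) < \lambda(0)$, the product $\tau(n_1,g_n)\tau(n_2,0)$ relative to $\tau(n,g_n)$ decays geometrically in $n_2$, which will make the tail $n_2 \ge \log n$ negligible.

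\begin{proof}
Write $P(n_2) = (3n_2+2)\tau(n_2,0)$. By Tutte's formula, $\tau(n_2,0) = \Theta\!\left(12^{n_2} n_2^{-5/2}\right)$, so the generating series $\sum_{n_2\ge 0} P(n_2) x^{n_2}$ has radius of convergence $\lambda(0)$, where $\lambda(0) = 1/12 > \lambda(\theta)$ since $\lambda$ is strictly decreasing and $\theta>0$.

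Fix $\eps = \min(\theta, 1 - \lambda(\theta)/\lambda(0))/2 > 0$, and split the sum according to whether $n_2 \le \eps n$ or $n_2 > \eps n$.

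\textbf{Large $n_2$.} Suppose $n_2 > \eps n$, so also $n_1 = n - n_2$ satisfies $n_1 \le (1-\eps)n$. By \cref{eq_asymptoo} (with $h=g_n$ for the first factor and $g=0$ for the second), and using $\tau(n_2,0) = e^{O(n_2)}$,
\[
\frac{(3n_1+2)\tau(n_1,g_n)(3n_2+2)\tau(n_2,0)}{\tau(n,g_n)} \le n^{O(1)}\,\frac{n_1^{2g_n}\exp(n_1 f(g_n/n_1))}{n^{2g_n}\exp(nf(g_n/n))}\,e^{O(n_2)}.
\]
Here $h_1 := g_n$ and $h_2 := 0$ play the role of the genera, so by concavity of $f$ (exactly as in the proof of \cref{lem_asympto_cut}, Case 1) the ratio of the $\exp(\cdot)$ terms is $e^{o(n)}$, and $n_1^{2g_n}/n^{2g_n} \le (1-\eps)^{2g_n}$, which decays exponentially in $n$ since $g_n \sim \theta n$. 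As there are at most $n$ terms with $n_2 > \eps n$, their total contribution is $e^{-\Omega(n)}\tau(n,g_n) = o(\tau(n,g_n))$.

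\textbf{Intermediate $n_2$.} Now suppose $\log n \le n_2 \le \eps n$, and set $n_1 = n - n_2$. By \cref{eq_ratioo} applied $n_2$ times (peeling off triangles one at a time, with $g/n$ staying within $o(1)$ of $\theta$ throughout since $n_2 = o(n)$), we get
\[
\frac{\tau(n_1,g_n)}{\tau(n,g_n)} = \prod_{j=0}^{n_2-1}\frac{\tau(n-n_2+j,g_n)}{\tau(n-n_2+j+1,g_n)} \le \big(\lambda(\theta) + o(1)\big)^{-n_2},
\]
where the $o(1)$ is uniform in this range. Multiplying by $P(n_2) \le n^{O(1)} \lambda(0)^{-n_2}$ and the polynomial factor $(3n_1+2)$, the contribution of a single term is at most
\[
n^{O(1)}\left(\frac{\lambda(\theta)+o(1)}{\lambda(0)}\right)^{n_2}.
\]
Since $\lambda(\theta)/\lambda(0) < 1 - 2\eps < 1$, for $n$ large this base is at most some fixed $\rho < 1$, so the per-term contribution is at most $n^{O(1)}\rho^{n_2} \le n^{O(1)}\rho^{\log n} = n^{O(1) - c}$ for a constant $c>0$ as large as we wish (by first shrinking $\eps$, hence $\rho$; more precisely $\rho^{\log n} = n^{\log\rho}$ and $\log\rho < 0$ can be made arbitrarily negative). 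Summing over the at most $n$ values of $n_2$ in this range still gives $o(1)\cdot\tau(n,g_n)$.

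Adding the two contributions proves the claim.
\end{proof}

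\textbf{Where the main obstacle lies.} The only genuinely delicate point is making the geometric decay in the intermediate regime override the polynomial prefactors $(3n_1+2)$, $(3n_2+2)$ \emph{and} the subexponential $n^{o(1)}$ errors coming from \cref{eq_ratioo}. This is exactly the role of the gap $\lambda(\theta) < \lambda(0)$: it provides a fixed contraction ratio $\rho<1$ that survives the $o(1)$ perturbations, so that $\rho^{n_2} \le \rho^{\log n}$ already beats any fixed power of $n$. One must be slightly careful that the $o(1)$ in the iterated application of \cref{eq_ratioo} is uniform over $n_2 \le \eps n$ — this is fine because $g_n/m$ stays in a compact neighborhood of $\theta$ bounded away from $1/2$ for all $m \in [n_1, n]$, and the $o(1)$ in \cref{eq_ratioo} is uniform in the genus. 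The large-$n_2$ regime is routine, handled verbatim as Case 1 of \cref{lem_asympto_cut}.
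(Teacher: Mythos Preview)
Your approach is essentially the paper's: bound $\tau(n-n_2,g_n)/\tau(n,g_n)$ by $(\lambda(\theta)+o(1))^{n_2}$ via iterated use of \cref{eq_ratioo}, bound $(3n_2+2)\tau(n_2,0)$ by $C\lambda(0)^{-n_2}$ via Tutte, and exploit $\lambda(\theta)<\lambda(0)$. The paper, however, does not split into cases: because $\lambda$ is decreasing, each factor $\tau(m-1,g_n)/\tau(m,g_n)=\lambda(g_n/m)+o(1)\le \lambda(g_n/n)+o(1)$ for \emph{every} $m\le n$, so the product bound holds uniformly for all relevant $n_2$ (terms with $n_1<2g_n-1$ vanish anyway). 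Your ``large $n_2$'' case is therefore unnecessary.

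Two concrete issues in your write-up. First, the displayed bound $\frac{\tau(n_1,g_n)}{\tau(n,g_n)}\le(\lambda(\theta)+o(1))^{-n_2}$ has the wrong sign in the exponent; you clearly mean $+n_2$ and use it correctly in the next line. Second, and more substantively, your claim that ``$\log\rho<0$ can be made arbitrarily negative by shrinking $\eps$'' is false: $\rho$ is pinned near $\lambda(\theta)/\lambda(0)$, which depends only on $\theta$, not on $\eps$. Without that claim, your argument (after summing and including the factor $3n_1+2\le 3n$) only yields
\[
\frac{1}{\tau(n,g_n)}\sum_{n_2\ge\log n}\cdots \;=\; O\!\bigl(n\cdot\rho^{\log n}\bigr)=O\!\bigl(n^{\,1+\log\rho}\bigr),
\]
which is $o(1)$ only when $\rho<1/e$, i.e.\ not for every $\theta\in(0,1/2)$. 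In fact the paper's own proof, read literally, establishes the same thing---namely that the sum is $o(n\,\tau(n,g_n))$ rather than $o(\tau(n,g_n))$---and this weaker bound is all that the application to the Goulden--Jackson recursion needs (there the double sum is compared against $(n+1)\tau(n,g_n)$). So your argument is adequate for the intended use, but the attempted patch via ``shrinking $\eps$'' is incorrect and should be deleted.
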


\begin{proof}
On the one hand, since $\lambda$ is a decreasing function, and by~\cref{lem_asympto_genus_ratio}, we have for all $n_2$
\[\frac{\tau(n-n_2,g_n)}{\tau(n,g_n)}\leq (\lambda(g_n/n)+o(1))^{n_2}\]
with uniform $o(1)$.

On the other hand, it is classical~\cite{Tut62} that
\[\tau(n,0)\sim C n^{-5/2} \lambda(0)^{-n}\]
for some constant $C$ as $\nto$, therefore for all $n$ there exists a constant $c$ such that
\[(3n+2)\tau(n,0)\leq c\lambda(0)^{-n}\]
Therefore
\begin{align*}
\frac{1}{n\tau(n,g)}\sum_{n_1+n_2=n\atop n_2\geq \log(n)}(3n_1+2)\tau(n_1,g_n)(3n_2+2)\tau(n_2,0)&\leq \sum_{n_1+n_2=n\atop n_2\geq \log(n)} (3+o(1))(\lambda(g_n/n)+o(1))^{n_2}c\lambda(0)^{-n_2}\\
&=O\left(\left(\frac{\lambda(\theta)+o(1)}{\lambda(0)}\right)^{\log n}\right)=o(1)
\end{align*}
because $\lambda(\theta)<\lambda(0)$.
\end{proof}

We come back to the proof of~\cref{thm_ratio}.
Applying~\cref{lem_asympto_cut,lem_asympto_cut_2} to the Goulden--Jackson recursion~\cref{GJ}, one obtains
\[(1+o(1))\tau(n,g_n))=36n^2\tau(n-2,g_n-1)+12\tau(n-1,g_n)+24\sum_{k=0}^{\log(n)}\tau(n-2-k,g_n)(3k+2)\tau(k,0)\]
By~\cref{eq_ratioo}, this entails
\begin{equation}\label{eq_sum_truncated}
(1+o(1))=36\lambda(\theta)^2n^2\frac{\tau(n,g_n-1)}{\tau(n,g)}+12\lambda(\theta)+24\lambda(\theta)^2\sum_{k=0}^{\log(n)}(\lambda(\theta)+o(1))^k(3k+2)\tau(k,0)
\end{equation}
Let $H(x):=\sum_{k\geq 0}x^k(3k+2)\tau(k,0)$. It is well known~\cite{Tut62} that $H$ has a singularity at $x=\lambda(0)$ and is finite at its singularity. 
Bu uniformity of the small $o$'s, we then have
\[24\sum_{k=0}^{\log(n)}(\lambda(\theta)+o(1))^k(3k+2)\tau(k,0)=H(\lambda(\theta)+o(1))+o(1))=H(\lambda(\theta))+o(1)).\]
Which together with \cref{eq_sum_truncated} yields
\[n^2\frac{\tau(n-2,g_n-1)}{\tau(n,g)}\to \frac{1}{36\lambda(\theta)^2}\left(1-12\lambda(\theta)-24\lambda(\theta)^2H(\lambda(\theta))\right)\]
as $\nto$.

Therefore, by~\cref{eq_ratio_size}, \cref{thm_ratio} holds with 
\begin{equation}\label{eq_psi}
\psi(\theta)=\frac{1}{36\lambda(\theta)^4}\left(1-12\lambda(\theta)-24\lambda(\theta)^2H(\lambda(\theta))\right).
\end{equation}

\section{Preliminaries: geometry}\label{sec_geom}

In a triangulation $t$, we call \emph{\te} any tuple $(c_1,c_2,p)$ such that $c_1$ and $c_2$ are two simple non-separating cycles which are not homotopic and do not intersect and $p$ is a simple path whose starting point lies on $c_1$, whose endpoint lies on $c_2$ but does not share any other vertex (and thus any edge) with $c_1$ and $c_2$.

Now, let us define the notion of \emph{\fe}. Take a tuple $(c,p)$ such that $c$ is a simple non-separating cycle and $p$ is a simple path whose starting point is the starting point of $c$ whose  endpoint lies on $c$, but does not share any other vertex (and thus any edge) with $c$. In that case, we can write $c=qr^{-1}$, where $q$ and $r$ are two simple paths that share starting points and end points with $p$. Then $pr^{-1}$ and $qp^{-1}$ are two simple cycles, and we say that $(c,p)$ is a \fe{} if neither of these cycles is contractible.

Actually, we will add one special case to this definition: if $c_1$ and $c_2$ are two simple non separating cycles that intersect at a single vertex but do not share any other vertex or edge, we say that $c_1,c_2$ is a \fe{} if they intersect transversely (i.e., going clockwise around the intersection vertex, edges of $c_1$ and $c_2$ alternate), and that it is a \te{}  otherwise (i.e. if they intersect tangentially).

\begin{figure}
\centering
\includegraphics[scale=0.5]{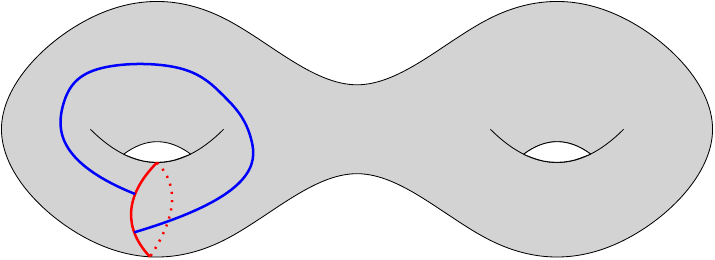}
\includegraphics[scale=0.5]{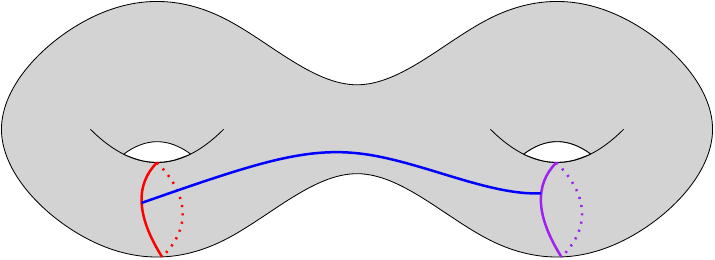}
\caption{Left: a \fe{}, right: a \te{}}\label{fig_thin_fat}
\end{figure}

The goal of this section is to prove the following:
\begin{theorem}\label{thm_eights}
If $t$ is a triangulation whose separating systole is $L$, then $t$ contains either:
\begin{itemize}
\item a simple non-contractible separating cycle of length $\leq L$;
\item a \te{} of total length $\leq L$;
\item a \fe{} of total length $\leq L$.
\end{itemize}
\end{theorem}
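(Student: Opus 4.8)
The plan is to start from a shortest \sncc{} $c$, of length $L$, viewed as a closed walk $(v_0,v_1,\dots,v_L=v_0)$ and hence as a closed curve on the surface, and — if $c$ is not already simple — to cut it into simpler pieces until what is left has the shape of a \te{} or a \fe{}, all the while keeping the total length at most $L$ and the relevant homotopy classes essential. If $c$ is simple we are in the first case and there is nothing to do, so assume from now on that some vertex is repeated along $c$.

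The basic tool is a \emph{no-shortcut principle}. Suppose $c$ is cut at a repeated vertex $v=v_i=v_j$ (with $(i,j)\ne(0,L)$) into two closed sub-walks $c=\gamma_1\gamma_2$ based at $v$, both of length $\ge1$ and hence $<L$. Additivity of the homology classes of the corresponding $1$-cycles gives $[\gamma_1]+[\gamma_2]=[c]=0$, so $\gamma_1$ and $\gamma_2$ are simultaneously separating or simultaneously non-separating. They cannot both be separating: since $c\simeq\gamma_1\gamma_2$ is non-contractible, at least one of $\gamma_1,\gamma_2$ is non-contractible, hence a \sncc{} of length $<L$, contradicting minimality. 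And neither can be contractible: if, say, $\gamma_1$ is contractible then $\gamma_2\simeq c$ is a non-contractible cycle with $[\gamma_2]=0$, i.e.\ again a \sncc{} of length $<L$. Thus every proper closed sub-walk obtained by cutting $c$ at a repeated vertex is a non-separating (in particular non-contractible) cycle.

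Now peel off a first cycle. Let $j$ be minimal with $v_j\in\{v_0,\dots,v_{j-1}\}$, say $v_j=v_i$; then $\alpha:=(v_i,\dots,v_j)$ is a \emph{simple} cycle, it is non-separating by the no-shortcut principle, and $|\alpha|<L$ since $c$ is not simple. Writing $c=a\,\alpha\,\beta$ with $a=(v_0,\dots,v_i)$ and $\beta=(v_j,\dots,v_L)$, the residual closed walk $w:=a\beta$, based at $v_0$, has length $L-|\alpha|$ and class $[w]=[c]-[\alpha]=-[\alpha]\ne0$, so it is non-separating. Decomposing the (even) edge-multiset of $w$ into edge-disjoint simple cycles and summing homology classes, some summand $\alpha'$ is a simple non-separating cycle of length $\le L-|\alpha|$, so that $|\alpha|+|\alpha'|\le L$. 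If $\alpha$ and $\alpha'$ are disjoint, following $w$ from a vertex of $\alpha$ to the first vertex it meets on $\alpha'$ and then pruning backtracks and truncating yields a simple path $p$ meeting $\alpha\cup\alpha'$ only at its two endpoints; with a careful choice of $\alpha'$ and $p$ one keeps $|\alpha|+|p|+|\alpha'|\le L$, and $(\alpha,\alpha',p)$ is the desired \te{} provided $\alpha\not\simeq\alpha'$. If instead $\alpha$ and $\alpha'$ meet at a single vertex, one is in the degenerate part of the definitions and reads off a \te{} or a \fe{} according to whether $\alpha$ and $\alpha'$ cross tangentially or transversely there; if they meet more, or if the path above overspends, a further round of cutting $c$ at the extra intersections and reapplying the no-shortcut principle reduces the configuration without increasing its length, and the process terminates because the length budget strictly decreases.

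The two steps that require genuine care — and which I expect to be the crux — are (i) ensuring that the two cycles finally produced are \emph{not} homotopic, as required in the definition of a \te{}, and (ii) carrying out all of the surgeries within the budget $L$ while covering every way two non-simple curves can be positioned on a surface (shared arcs, nested self-intersections, tangential versus transverse contact, and so on). For (i) the point is that $H_1$ of an orientable surface is torsion-free, so two homologous disjoint simple cycles can fail to be genuinely distinct only by cobounding an annulus; combining this with the non-contractibility of $c$ rules out the remaining bad case: if \emph{every} simple non-separating cycle one can extract from $w$ were freely homotopic to $\alpha^{\pm1}$, then $w$, and hence $c\simeq(a\alpha a\inv)w$, would be homotopic to a power of $\alpha$, and $[c]=0$ would force that power to be trivial, making $c$ contractible — a contradiction. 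Upgrading this homotopy dichotomy into an explicit extraction of a simple cycle in a prescribed essential homotopy class, with the correct length bound, is the technical heart of the section.
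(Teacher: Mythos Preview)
Your no-shortcut principle is correct and is indeed the right opening observation: any splitting of a shortest \sncc{} at a repeated vertex yields two non-separating, non-contractible sub-cycles, so the first simple cycle $\alpha$ you peel off is non-separating. The genuine gaps appear afterwards. An edge-disjoint decomposition of the residual walk $w$ into simple cycles is a statement in $H_1$, not in $\pi_1$: it gives $[w]=\sum[\alpha_i']$ but does not express $w$ as a product of conjugates of the $\alpha_i'$. Hence even if every non-separating $\alpha_i'$ arising this way were freely homotopic to $\alpha^{\pm1}$, nothing you have written forces $w$ itself to be a power of $\alpha$, and your reductio in~(i) does not close as stated. Separately, the budget $|\alpha|+|p|+|\alpha'|\le L$ with $p$ simple and meeting $\alpha\cup\alpha'$ only at its endpoints is asserted (``with a careful choice\dots'') but not established: both $p$ and $\alpha'$ draw their edges from $w$, and you give no mechanism to route $p$ while avoiding $\alpha$ in its interior and not double-spending edges already committed to $\alpha'$. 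Finally, when $\alpha$ and $\alpha'$ meet in more than one vertex you gesture at ``further rounds of cutting'', but this is precisely the situation that should produce the non-degenerate \fe{} $(c,p)$, where one must still check that both sub-cycles $pr^{-1}$ and $qp^{-1}$ are non-contractible --- a condition your argument never touches.

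The paper closes all of this with a device you are missing: rather than taking merely a shortest \sncc{}, it takes one that is in addition minimal for a lexicographic order on the sequence $\bigl(|\{\vec e_1,\dots,\vec e_i\}|\bigr)_{i}$, the \emph{set-length}. Writing this doubly-minimal cycle in reduced form $p\gamma^kP'$ and following $P'$ \emph{sequentially} until it first returns to $\gamma$ or to itself keeps the homotopy information intact (no edge-multiset decomposition is ever used), and the extra minimality is exactly what disposes of the two obstructions you flagged. If the first simple cycle $\eta$ encountered along $P'$ is homotopic to $\gamma$, one swaps an occurrence of $\eta$ for one of $\gamma$ to strictly lower the set-length at equal length; if the chord that $P'$ cuts across $\gamma$ bounds a disk with one arc of $\gamma$, one shortcuts and lowers the length outright. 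Those two minimality-violation lemmas are the ``technical heart'' you allude to, and the set-length order is the missing idea that makes them go through.
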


With this theorem in hand, it will be much easier to prove the lower bound of~\cref{thm_sepsys}: it will suffice to forbid short occurrences of \te{s}, \fe{s} and simple non-contractible separating cycles in $\bT$, instead of dealing with the infinite family of possible non-contractible separating cycles.

We need a few more definitions and technical lemmas before proving~\cref{thm_eights}. 
A cycle $C$ will be called \emph{reduced} if it can be written $C=\tilde\gamma C'$ where $\tilde\gamma$ is a simple cycle, and $C'$ a (possibly empty) cycle.

\begin{lemma}\label{lem_reduced}
Up to changing its starting point, any cycle $C$ is reduced. In that case, $C$ can be written in reduced form, that is, one can decompose $\tilde\gamma$ as $\tilde\gamma=pr^{-1}$, where $p$ and $r$ are two simple paths whose starting and endpoints coincide, but do not share any other vertices or edges (with $p$ possibly empty). Then, setting $\gamma=r^{-1}p$, one can write  $C=p\gamma^kP'$ where $k$ is a positive integer, and $P'$ is a path that starts with an edge that does not belong to $\gamma$ (or the empty path).
\end{lemma}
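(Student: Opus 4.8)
The plan is to prove the two assertions in order, the idea being to locate the ``first'' simple sub-cycle traced by $C$ and then to count how many times $C$ keeps winding around it. As a preliminary step I would assume $C$ has no \emph{spur}, i.e.\ no edge immediately followed by its inverse. This is harmless in the sequel (the cycles to which the lemma gets applied will be shortest in their homotopy/homology class and hence automatically spur-free, since removing a spur changes neither class), and some such reduction is genuinely needed: for $C = e_1 e_2 e_3 e_1 e_1\inv$ with $e_1 e_2 e_3$ a simple triangle, no re-rooting admits a decomposition of the required shape.

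For the first assertion I would list the vertices visited by $C$ as $v_0,\dots,v_\ell=v_0$ with $\ell\ge 1$ and let $j\ge 1$ be minimal with $v_j\in\{v_0,\dots,v_{j-1}\}$ (this exists since $v_\ell=v_0$); writing $v_j=v_i$, minimality of $j$ forces $v_i,\dots,v_{j-1}$ to be pairwise distinct, so the arc of $C$ from $v_i$ to $v_j$ is a simple cycle $\tilde\gamma$. Re-rooting $C$ at $v_i$ then writes $C=\tilde\gamma C'$ with $C'$ the (possibly empty) cycle formed by the remaining edges, based at $v_i$, so $C$ is reduced. From here on I regard $C$ as already written in this form.

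For the reduced form, I would let $k\ge 1$ be the largest power of $\tilde\gamma$ that is a prefix of the edge-word of $C$ (finite, and $\ge 1$ by the previous step), write $C=\tilde\gamma^k Q$ (so $Q$, possibly empty, is not prefixed by $\tilde\gamma$), and let $p$ be the longest common prefix of the edge-words of $Q$ and $\tilde\gamma$. Since $Q$ is not prefixed by $\tilde\gamma$, $p$ is a \emph{proper} prefix, so $\tilde\gamma=pr\inv$ with $r\inv$ a non-empty suffix of $\tilde\gamma$; simplicity of $\tilde\gamma$ makes $p$ and $r$ simple paths with the same endpoints and no other common vertex or edge (the case $p$ empty, where $r=\tilde\gamma\inv$ is a simple cycle, I would treat separately). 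Setting $\gamma:=r\inv p$ — a cyclic rotation of $\tilde\gamma$, hence again a simple cycle with the same edge set — and letting $P'$ be the part of $Q$ after $p$, one gets $C=\tilde\gamma^k pP'=p(r\inv p)^kP'=p\gamma^kP'$, as desired, with $k\ge 1$.

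The step I expect to be the real point is checking that $P'$ is empty or starts with an edge not in $\gamma$. Here I would note that the first edge $f$ of $P'$ leaves the vertex $u$ at which $p$ ends, and that the only edges of $\tilde\gamma$ incident to $u$ are the one entering $u$ along $p$ (or the last edge of $\tilde\gamma$, if $p$ is empty) and the one leaving $u$ right after $p$: the second is excluded as a candidate for $f$ by maximality of the common prefix $p$, and $f$ cannot be the reverse of the first either, since that would be a spur in $C$. As $\gamma$ and $\tilde\gamma$ share the same edge set, $f\notin\gamma$, which finishes the argument. This last use of spur-freeness, at the junction between $\tilde\gamma^k p$ and $P'$, is the only delicate point; the rest is bookkeeping about prefixes of the single simple cycle $\tilde\gamma$.
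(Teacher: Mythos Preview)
Your argument follows essentially the same route as the paper's: locate the first self-intersection to extract the simple cycle $\tilde\gamma$, then measure how far $C$ continues along $\tilde\gamma$ before branching off. The paper phrases the second step as ``smallest $c>b$ with $\vec e_c\neq\vec e_i$ for all $1\le i\le b$'', which, since $\tilde\gamma$ is simple and hence has a unique outgoing oriented edge at each vertex, is exactly your ``longest prefix of $\tilde\gamma^\infty$''.

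The one discrepancy is your added spur-freeness hypothesis. It is not needed: in this paper paths are sequences of \emph{oriented} edges $\vec e_i$, and ``an edge that does not belong to $\gamma$'' is meant in that sense (the paper's own proof checks $\vec e_c\neq\vec e_i$, not equality of underlying edges). In your purported counterexample $C=e_1e_2e_3e_1e_1^{-1}$ one gets $\tilde\gamma=e_1e_2e_3$, $c=5$, $p=e_1$, $\gamma=e_2e_3e_1$, $k=1$, $P'=e_1^{-1}$, and the first oriented edge $e_1^{-1}$ of $P'$ is indeed not among $\{e_2,e_3,e_1\}$. So the lemma holds as stated for all cycles, your final paragraph's case split collapses (the ``reverse of the entering edge'' is already a different oriented edge), and you can drop the extra assumption.
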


\begin{proof}
If $C$ is the power of a simple cycle, the result is obvious, now assume this is not the case. Let $(\vec{e}_i)_{1 \leq i \leq \ell}$ be its edges and $(v_i)_{0 \leq i \leq \ell}$ its vertices. Since $C$ is non simple, there exists $0\leq a<b<\ell$ such that $v_a=v_b$. Taking $b$ smallest possible, up to changing the starting point of $C$, one can assume that $a=0$. Set $\tilde\gamma=(\vec{e}_i)_{1 \leq i \leq b}$, the cycle $C$ is now in reduced form. Since $C$ is not the power of a simple cycle, there must exist a smallest $c>b$ such that $\vec{e}_c\neq\vec{e}_i$ for all $1\leq i\leq b$. Then one can set  $p=(\vec{e}_i)_{1 \leq i \leq (c-1\mod b)}$ (note that $p$ is empty if $c=1\mod b$), $r=\left((\vec{e}_i)_{(c\mod b) \leq i \leq b}\right)\inv$, and $P'=(\vec{e}_i)_{c \leq i \leq \ell}$.
\end{proof}

\subsection{A nice cycle}

In order to prove~\cref{thm_eights}, we will exhibit a particular nice cycle that contains either a \te{}, a \fe{}, or is simple. This nice cycle will be a minimum with respect to a well chosen partial order. Let us define it.

For any cycle $C=(\vec{e}_i)_{1 \leq i \leq \ell}$, we define its \emph{set-length} $SL(C)=(SL_i(C))_{1 \leq i \leq \ell}$ where $SL_i(C)$ is the cardinality of the set $\{\vec{e}_i|1 \leq j \leq i\}$. 

This defines a weakly increasing sequence with increments $0$ or $1$. We can then define a partial order on cycles using the lexicographic order on set-length. For each pair $C,C'$ of cycles of same length $\ell$ such that $SL(C)\neq SL(C')$, there exists $j=\min\{1\leq i\leq\ell|SL_i(C)\neq SL_i(C')\}$. We say that $C\succ C'$ if and only if $SL_j(C)> SL_j(C')$.

We are now ready to define our nice cycle. In the rest of the section, we fix a triangulation $t$ with separating systole equal to $L$. Consider the set of reduced \sncc{s} of $t$ with length $L$ (which is non-empty thanks to~\cref{lem_reduced}), and in this set, pick a cycle $\CC$ which is minimal with respect to the order $\succ$ defined above.

Let us write $\CC$ in reduced form
\[\CC=p\gamma^k P'.\]
Now we will prove many nice but technical properties about $\CC$. The first lemma is straightforward:

\begin{lemma}
The simple cycle $\gamma$ is non separating.
\end{lemma}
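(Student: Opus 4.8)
The plan is to argue by contradiction: suppose $\gamma$ is separating, and use this to produce a shorter \sncc{} or one of the same length that is strictly smaller in the $\succ$ order, contradicting the minimality of $C(t)$.

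First I would recall the structure: $C(t) = p\gamma^k P'$ in reduced form, where $\gamma = r^{-1}p$ is a simple cycle (coming from $\tilde\gamma = pr^{-1}$), $k \geq 1$, and $P'$ starts with an edge not on $\gamma$. Since $\gamma$ is a simple cycle, if it were separating it would either be contractible or a simple \sncc{}. If $\gamma$ were a simple \sncc{}, then because its length is at most $L$ (it is a sub-loop of $C(t)$, roughly of length $|C(t)|/k \le L$, with equality only if $k=1$ and $p, P'$ are empty), we would immediately land in the first bullet of \cref{thm_eights} — but wait, we are only trying to prove this lemma, so what we actually get is a contradiction with minimality unless $\gamma = C(t)$, i.e. $k=1$ and $p=P'=\emptyset$; but then $C(t)=\gamma$ is simple, contradicting that $C(t)$ is in genuine reduced (non-simple) form with $k\ge 1$ and the decomposition nontrivial. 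Actually the cleanest route: if $\gamma$ is a separating non-contractible cycle of length $<L$ we contradict that $L$ is the separating systole; if $\gamma$ has length exactly $L$ then $C(t)$ must equal $\gamma$ up to cyclic rotation, so $C(t)$ is simple, but a simple cycle is its own reduced form with the trivial decomposition, and one checks this is excluded by how the reduced form was extracted in \cref{lem_reduced} (there $C$ was assumed not to be a power of a simple cycle, or if it is, $\gamma$ plays the role of $C(t)$ and again $C(t)$ is simple, hence not in the family we minimized over unless it is itself the sought cycle). So the only remaining possibility is that $\gamma$ is contractible.

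Then I would rule out $\gamma$ contractible by a homology/length-surgery argument. The key point is that $C(t) = p\gamma^k P'$ is homologous (indeed homotopic, up to basepoint) to $p\gamma^{k-1}P' \cdot (\text{loop})$... more precisely, if $\gamma$ is null-homotopic then $p\gamma^k P'$ is freely homotopic to $p P'$ (we can absorb each copy of $\gamma$). The cycle $pP'$ — or rather the cycle obtained from $C(t)$ by deleting the $k$ copies of $\gamma$ — is still separating (homology is unchanged since $\gamma$ is null-homologous) and still non-contractible (homotopy type is unchanged). But it is strictly shorter than $C(t)$ (since $\gamma$ is nonempty, as $k \ge 1$ and $\gamma = r^{-1}p$ is a genuine simple cycle of positive length), contradicting the minimality of $L$ as the separating systole. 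One subtlety to handle carefully: after deleting the $\gamma$'s, the resulting edge sequence $pP'$ might no longer be a valid closed walk in the combinatorial sense if $p$ is empty or $P'$ is empty — but if $p$ is empty then $\gamma = r^{-1}$ is a sub-loop of $C(t) = \gamma^k P'$ based at the basepoint, and $C(t)$ is freely homotopic to $P'$ (a genuine cycle since $\gamma^k$ closes up), again shorter; symmetric reasoning if $P'$ is empty.

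The main obstacle I expect is making the "absorb a copy of $\gamma$" step combinatorially rigorous rather than just topologically plausible — i.e. checking that removing the subword $\gamma$ from $p\gamma^k P'$ yields an honest cycle in the map whose homotopy and homology classes are genuinely unchanged, including the degenerate cases where $p$ or $P'$ is empty and where the basepoint sits on $\gamma$. This is really a statement that freely homotopic closed walks have the same homology class and that inserting/deleting a null-homotopic simple loop based at a point on the walk is a free homotopy; it should follow from standard facts about the fundamental group of the surface and the $\pi_1$-to-$H_1$ abelianization, but the bookkeeping with empty subpaths and the choice of basepoint (recall $C(t)$ was only reduced "up to changing its starting point") needs a little care. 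Everything else is short.
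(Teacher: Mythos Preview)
Your argument is correct and is exactly the ``straightforward'' argument the paper omits: if $\gamma$ were separating and non-contractible it would be a \sncc{} of length $\leq L$, forcing $|\gamma|=L$ and hence $C(t)=\gamma$ simple (which the paper handles separately in the proof of \cref{thm_eights}); if $\gamma$ were contractible then $C(t)=p\gamma^kP'$ is homotopic rel basepoint to the strictly shorter cycle $pP'$, contradicting that $L$ is the separating systole. Your concern about the combinatorial bookkeeping is not a real obstacle: writing $C(t)=(pr^{-1})^k(pP')$ in $\pi_1(S,v_0)$ and noting that $pr^{-1}=\tilde\gamma$ is null-homotopic exactly when $\gamma$ is makes the deletion rigorous, and $pP'$ is an honest closed walk in the map since both $\tilde\gamma^k$ and $C(t)$ end at $v_0$.
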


The following lemma will be useful to search for a \te{}.
\begin{lemma}\label{lem_te}
The path $P'$ cannot be written as $P'=q\eta P''$ where 
\begin{itemize}
\item $P''$ is a path;
\item $\eta$ is a simple cycle homotopic to $\gamma$ but disjoint from $\gamma$
\item $q$ is a simple path that has its starting point on $\gamma$, its endpoint on $\eta$, but does not share any other vertex or edge with $\gamma$ and $\eta$.
\end{itemize} 
\end{lemma}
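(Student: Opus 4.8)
The plan is to argue by contradiction: suppose $P'$ admits such a decomposition $P'=q\eta P''$. The idea is that the presence of a ``second'' simple cycle $\eta$ homotopic to $\gamma$, together with $\gamma$ itself and a connecting path, gives us the raw material to build a \emph{shorter} \sncc{}, contradicting the minimality of $L$ (or, if the length is not strictly smaller, to produce a cycle with smaller set-length, contradicting the $\succ$-minimality of $\CC$).

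First I would recall that $\CC = p\gamma^k P'$ is a \sncc{}, and that $p$ is a simple path from a vertex of $\gamma$ back to (the starting point of) $\gamma$, $\gamma$ is a simple non-separating cycle, and $P'$ starts with an edge not on $\gamma$ and returns to the starting point of $\CC$ to close up the cycle. Under the assumed decomposition we can reorganize the data: we have $\gamma$, the disjoint homotopic copy $\eta$, and the simple connecting path $q$ meeting $\gamma$ only at its start and $\eta$ only at its end. Now $(\gamma, \eta, q)$ is essentially the data of a \te{} (two disjoint simple non-separating cycles, not homotopic? — here they \emph{are} homotopic, which is the subtlety), or of a short separating curve: the curve $\gamma q \eta\inv q\inv$ (suitably based) is separating since $\gamma$ and $\eta$ are homotopic, hence homologous, so $[\gamma]-[\eta]=0$ in homology, making $\gamma q\eta\inv q\inv$ null-homologous; and it is non-contractible provided $\gamma$ is non-contractible and $\eta$ is not ``parallel enough'' to $\gamma$ through $q$ — but since $q$ is a path disjoint from both, $\gamma$ and $\eta$ cobound an embedded cylinder only if the whole configuration is planar-like, which would contradict $\gamma$ being non-separating in a genus-$\geq 1$ piece. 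The cleaner route: use the fact that $C(t)$ is reduced and extract a subcurve.

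The key computation is the length bookkeeping. Write $P' = q\eta P''$, so $|P'| = |q| + |\eta| + |P''|$, and $|\eta| = |\gamma|$ (homotopic simple cycles need not have equal length, but we only need $|\eta| \geq 1$ and in fact we will compare against the full length $L = |p| + k|\gamma| + |P'|$). Consider the candidate cycle obtained by splicing: go along $p$, then around $\gamma$ some number of times, then along $q$ to $\eta$, around $\eta$, back along $q\inv$, and close up. I would show this candidate is a \sncc{} of length at most $L$: the homology class is unchanged (we replaced a $\gamma$-loop by an $\eta$-loop, homologous to it, and added a round trip $qq\inv$ which is null-homotopic up to the inserted $\eta$), it is non-contractible because it still ``uses'' $\gamma$, and the total length does not exceed $L$ because we have essentially rerouted the tail $P''$ (which we drop) — here one must be careful that dropping $P''$ still gives a \emph{closed} curve, which forces choosing the splice point correctly so the endpoints match. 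Then either the new cycle is strictly shorter, contradicting minimality of $L$, or it has length exactly $L$ but visits strictly fewer distinct edges near the spliced region (since $\eta$ is disjoint from $\gamma$ and $P''$ is discarded, edges are saved), giving a $\succ$-smaller reduced \sncc{}, contradicting the choice of $\CC$.

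The main obstacle I expect is the topological verification that the spliced cycle is genuinely non-contractible and that I have not accidentally created a contractible or non-separating curve: homology is easy to track (it is additive and $[\gamma]=[\eta]$), but \emph{homotopy}/contractibility requires knowing that $\gamma$, being a simple non-separating cycle appearing with multiplicity $k\geq 1$ in a \emph{reduced} minimal separating cycle, cannot be ``cancelled out'' by the surgery — this likely needs the reduced-form structure from \Cref{lem_reduced} and possibly a small case analysis according to whether $\eta P''$ reconnects to $\gamma$ or to $p$. A secondary annoyance is the precise matching of endpoints when excising $q\eta P''$ and re-inserting $q\eta\inv q\inv$ or similar: getting a bona fide cycle (not merely a path) may require basing everything at the common vertex $\gamma \cap p$ and carefully using the hypothesis that $q$ shares no vertex with $\gamma$ other than its starting point. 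I would isolate this endpoint/closure check as the technical heart of the argument and dispatch the length and homology parts as routine.
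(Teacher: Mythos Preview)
Your overall strategy --- contradiction via minimality of $L$, falling back on $\succ$-minimality in the equal-length case --- is correct, but the concrete candidate cycles you build do not work, and you miss the simple substitution that makes the argument go through.

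First, the curve $\gamma q\eta^{-1}q^{-1}$ is \emph{contractible}: the hypothesis is precisely that $\eta$ is homotopic to $\gamma$ (as free loops), and $q$ is a path connecting them, so $q\eta q^{-1}$ is based-homotopic to $\gamma$, whence $\gamma(q\eta q^{-1})^{-1}$ is null-homotopic. So this first candidate is not a \sncc{} at all. Your second ``splicing'' candidate drops $P''$, and as you yourself note, the endpoints then do not match; there is no clean repair, because $P''$ is what carries the cycle back to the basepoint.

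The move the paper makes avoids both problems: keep $P''$ and simply \emph{replace} the subword $q\eta$ by $\gamma q$ inside $\CC=p\gamma^k q\eta P''$ (this is legitimate since $q\eta\sim\gamma q$), yielding $\CC'=p\gamma^{k+1}qP''$; symmetrically, replace $\gamma^k q$ by $q\eta^k$ to get $\CC''=pq\eta^{k+1}P''$. Both are homotopic to $\CC$, hence automatically \sncc{}'s --- no topological verification needed. Comparing lengths, $|\CC'|-|\CC|=|\gamma|-|\eta|$ and $|\CC''|-|\CC|=k(|\eta|-|\gamma|)$, so unless $|\gamma|=|\eta|$ one of them is strictly shorter than $L$. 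In the equal-length case $\CC'$ has length $L$, is reduced, and since after the common prefix $p\gamma^k$ the cycle $\CC'$ reuses edges of $\gamma$ while $\CC$ starts the fresh path $q$, one has $\CC\succ\CC'$, contradicting the choice of $\CC$. The key idea you are missing is this in-place substitution, which preserves the homotopy class (and closure) of $\CC$ for free.
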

See \cref{fig_lem_te} for an illustration.
\begin{figure}
\center
\includegraphics[scale=0.8]{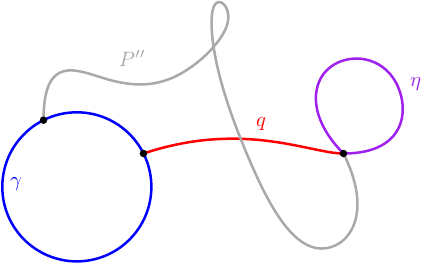}
\caption{The paths and cycles introduced in \cref{lem_te}}\label{fig_lem_te}
\end{figure}
\begin{proof}
Assume by contradiction that it is the case, i.e. $\CC=p\gamma^kq\eta P''$. We will show that it violates minimality in one way or another.

First, observe that $q \eta q\inv$ is homotopic to $\gamma$. Therefore, the two cycles $\CC'=p\gamma^{k+1}q P''$ and $\CC''=pq\eta^{k+1} P''$ are both homotopic to $\CC$. Since $\CC$ has minimal length among \sncc{s}, this implies that $\gamma$ and $\eta$ must have the same length. Therefore $\CC'$ is also a reduced \sncc{} of minimal length. But there is a contradiction, since $\CC\succ \CC'$.
\end{proof}

If instead, we search for a \fe{}, we will use the next lemma.
\begin{lemma}\label{lem_fe}
One cannot find all the following objects (as illustrated in \cref{fig_lem_fe}):
\begin{itemize}
\item two simple paths $p_1$ and $p_2$ such that $\gamma=p_1p_2$;
\item a simple path $q$ that shares its starting point and endpoint with $p_1$ but does not share any other vertex or edge with $\gamma$, such that $qp_1\inv$ is contractible;
\item a path $P''$.
\end{itemize}
such that $P'=qP''$.
 
\end{lemma}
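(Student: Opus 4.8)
The plan is to argue by contradiction in exactly the same spirit as the proof of \cref{lem_te}: assume such a decomposition $P' = qP''$ exists, so that $\CC = p\gamma^k q P''$ with $\gamma = p_1p_2$, $q$ a simple path from the common starting point of $p_1$ (and of $\gamma$, after the $\gamma^k$ block) to the common endpoint of $p_1$, disjoint from $\gamma$ otherwise, and $qp_1^{-1}$ contractible. From $qp_1^{-1}$ contractible we get that $q$ is homotopic to $p_1$, hence we may freely substitute $q$ for $p_1$ (or $p_1$ for $q$) inside any cycle without changing its homotopy class. The strategy is to produce, from $\CC$, another reduced \sncc{} of the same length $L$ that is strictly smaller for the order $\succ$, contradicting minimality of $\CC$.

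The key computation is to rewrite the tail $\gamma^k q = (p_1p_2)^k q$. Using $q \simeq p_1$, the cycle $\CC = p\,(p_1p_2)^k\, q\, P''$ is homotopic to $\CC_1 = p\,(p_1p_2)^{k}\, p_1\, P'' = p\, p_1\,(p_2p_1)^{k}\, P''$ and also — since $qp_1^{-1}$ is contractible, i.e. $p_1$ can be replaced by $q$ once and then the remaining $\gamma$-powers left alone, giving a shorter loop — we compare lengths. Here $q$ has length $|p_1|$ because $\CC$ has minimal length among \sncc{s}: the cycle obtained by replacing one copy of $\gamma = p_1p_2$ by $q p_2$ (legitimate up to homotopy since $q \simeq p_1$) is again an \sncc{}, so its length $L - |p_1| + |q|$ must be $\geq L$, forcing $|q| \geq |p_1|$; and conversely replacing $q$ by $p_1$ shows $|p_1|\geq |q|$. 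Hence $|q| = |p_1|$ and the rewritten cycle $\CC_1 = p\, p_1\,(p_2 p_1)^k\, P''$ (or a cyclic reparametrisation putting it in reduced form) is a reduced \sncc{} of length exactly $L$.

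Then I would show $\CC \succ \CC_1$, or else that $\CC_1$ fails to be reduced in the prescribed way and a further rewriting does the job. The point is that in $\CC = p\gamma^k q P''$ the block $q$ comes \emph{after} $k$ full traversals of $\gamma$, so by the time the set-length counter reaches the $q$-block all edges of $\gamma$ (in particular all edges of $p_1$) have already been counted; replacing $q$ by $p_1$ therefore does not increase any $SL_i$ and strictly decreases it at the first index where an edge of $q\setminus\gamma$ would have been newly counted (such an index exists unless $q$ is edge-contained in $\gamma$, a case ruled out by the disjointness hypothesis on $q$, or unless $q$ is empty, in which case $p_1$ is a contractible simple cycle, forcing $p_1$ empty and $\gamma$ not in reduced form — contradiction, or a degenerate case handled directly). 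Thus $SL(\CC_1) \prec SL(\CC)$ at that index, contradicting minimality. Up to reindexing $\CC_1$ to start at the vertex making it reduced (which does not affect the multiset of set-length increments beyond that first decrease, by the standard argument in \cref{lem_reduced}), we are done.

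The main obstacle I expect is bookkeeping around \emph{where} in the set-length sequence the improvement occurs and making sure the reparametrisation needed to put $\CC_1$ in reduced form does not erase the gain: one has to check that the decrease happens at an index lying inside $P''$ (after the full $p\,p_1\,(p_2p_1)^k$ prefix), which is where the edges of $q\setminus\gamma$ used to be counted but no longer are, and that no earlier index of $SL$ is affected — the prefixes $p\gamma^k$ of $\CC$ and $p\,p_1(p_2p_1)^k$ of $\CC_1$ visit exactly the same set of edges in the same order up to the cyclic relabelling, so their set-length sequences agree on that range. A secondary subtlety is confirming that $\CC_1$ is still separating and non-contractible (immediate from homotopy invariance of both properties, since $\CC_1 \simeq \CC$) and still \emph{non-simple} so that "reduced" makes sense — but if $\CC_1$ turned out simple we would already be in the first bullet of \cref{thm_eights} and could stop, so this is not actually an obstruction to the contradiction we want, only something to phrase carefully.
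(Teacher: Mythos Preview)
Your argument is correct and is essentially the paper's own proof: both replace $q$ by the homotopic path $p_1$ to form $\CC_1 = p\gamma^k p_1 P''$, and both invoke the alternative substitution $\gamma \leadsto qp_2$ to constrain $|q|$ against $|p_1|$; the paper simply organises this as a dichotomy (if $|q|\geq |p_1|$ then $\CC_1$ contradicts minimality in length or in $\succ$, and if $|q|<|p_1|$ then $p(qp_2)^k P'$ is a strictly shorter \sncc{}), whereas you first squeeze $|q|=|p_1|$ from the two length inequalities and then use $\succ$. Your concerns about reparametrising $\CC_1$ into reduced form are unnecessary: $\CC_1$ shares its first $|p|+k|\gamma|$ edges with $\CC$, so it already begins with the same simple cycle $\tilde\gamma$ and is reduced as it stands, and the set-length sequences agree up to the first edge of $q$ (new in $\CC$, already counted in $\CC_1$), giving $\CC\succ\CC_1$ directly.
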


\begin{figure}
\center
\includegraphics[scale=0.8]{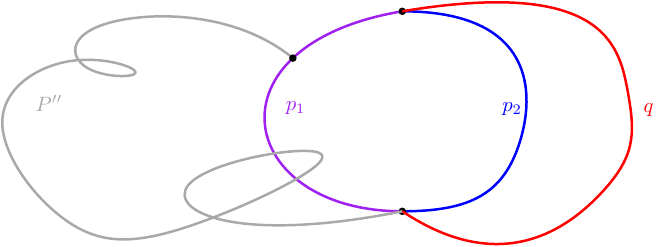}
\caption{The paths introduced in \cref{lem_fe}}\label{fig_lem_fe}
\end{figure}

\begin{proof}
As before, assume it is the case by contradiction, i.e. $\CC=p\gamma^kqP''$, and let us violate minimality.

Consider $\CC'= p\gamma^{k}p_1 P''$, it is clearly homotopic to $\CC$, and what's more, $\CC\succ \CC'$. Therefore $q$ must be strictly shorter than $p_2$ in order not to violate minimality.
But if this is the case, then $\gamma'=qp_2$ is homotopic to and strictly shorter than $\gamma$, therefore $\CC''=p\gamma'^{k}P'$ is a \sncc{} that is strictly shorter than $\CC$, a contradiction.
\end{proof}

\begin{remark}
Since $\CC=p\gamma^{k}P'$ is of minimal length, it is natural to think that $k$ should be equal to $1$, see~\cref{conj_decomp}.
\end{remark}

\subsection{Proof of~\cref{thm_eights}}

We are now ready to prove~\cref{thm_eights}.

Assume $\CC$ is not simple, otherwise we are done. In that case, $P'$ is non empty.
Write \[\CC=p\gamma^{k}P'=(\vec{e}_i)_{1 \leq i \leq L}.\]
As before, write $v_0$ for the starting point of $\CC$ and $v_i$ for the endpoint of $\vec{e}_i$. 
Let also $m>0$ be the length of $\gamma$ and $0\leq r<m$ the length of $p$.

Now, let $b>km+r$ be minimal such that there exists $a<b$ with $v_a=v_b$.
In other words, $P'$ starts with a path that will eventually hit $\gamma$ or itself. There are two cases:

If $a> km+r$ (the path hits itself but not where it started), set $q=(\vec{e}_i)_{km+r+1 \leq i \leq a}$, it is a simple path, and $\eta=(\vec{e}_i)_{a+1 \leq i \leq b}$, it is a simple cycle. If $\eta$ was contractible or separating, that would contradict the minimality of $\CC$, hence $\eta$ is non-separating. And by~\cref{lem_te}, it cannot be homotopic to $\gamma$. Therefore $(\gamma,q,\eta)$ is a \te{} of $t$.

If $a< km+r$ and $a\neq r \mod m$ (the path hits $\gamma$ but not where it left it), let $q=(\vec{e}_i)_{km+1 \leq i \leq b}$, it is a simple path, and we want to show that $(\gamma, q)$ forms a \fe{}. We can write $\gamma= p_1p_2$ where $p_1$, $p_2\inv$ and $q$ share staring points and endpoints but no other edge. By \cref{lem_fe}, $qp_1\inv$ is not contractible. It remains to show that $p_2q$ is not contractible. But if this was the case, we would have $\CC=p\gamma^{k-1} p_1p_2qP''$, and clearly the cycle $p\gamma^{k-1} p_1P''$ would be homotopic to and strictly shorter than $\CC$, a contradiction. We have thus shown that $(\gamma, q)$ is a \fe{}.

Finally, if $a= km+r$, this is the case where the path returns hits itself and $\gamma$ at the same time, and we have a special case which either gives us a \te{} or a \fe{}.

\section{Proof of~\cref{thm_sepsys}}

Let us show now that~\cref{thm_sepsys} holds. For the upper bound, we can directly apply the following deterministic result of \cite{CVHM14}:

\begin{theorem}[Corollary 3.2, part 3 in \cite{CVHM14}]\label{thm_upper}
There exists a constant $c''>0$ such that every triangulation of $\T(2n,g)$ contains a separating cycle of length at most $c''\sqrt{2n/g}\log (g)$.
\end{theorem}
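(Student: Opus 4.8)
The plan is to deduce \cref{thm_upper} from a ``short system of loops'' for $t$ together with an elementary symplectic argument. The geometric input I would rely on is the following statement, which is essentially one of the results of \cite{CVHM14}: every $t\in\T(2n,g)$ with $g\geq 2$ carries a system of $2g$ loops $\gamma_1,\dots,\gamma_{2g}$, all based at one common vertex $v_0$, which generate $\pi_1(t)$ -- so that their homology classes form a $\ZZ$-basis of $H_1(t;\ZZ)\cong\ZZ^{2g}$ -- and whose total length satisfies $\sum_{i}|\gamma_i|=\LandauO\!\left(\sqrt{gn}\,\log g\right)$. (Equivalently: a cut graph of $t$ with a single vertex and total length $\LandauO(\sqrt{gn}\,\log g)$.) Granting this, everything is elementary; I flag below why this input is the genuinely hard part.

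Work with $\ZZ/2$ coefficients, so that $\bar\gamma_1,\dots,\bar\gamma_{2g}$ is a basis of $H_1(t;\ZZ/2)\cong(\ZZ/2)^{2g}$, equipped with the mod-$2$ intersection pairing $\Omega$, a non-degenerate alternating (symplectic) form. By averaging, at least $g+1$ indices $i$ satisfy $|\gamma_i|\leq\frac1g\sum_j|\gamma_j|=\LandauO\!\left(\sqrt{n/g}\,\log g\right)$; writing $S$ for the set of such indices, $W:=\operatorname{span}\{\bar\gamma_i:i\in S\}$ has dimension $\geq g+1$. Since a subspace of a $2g$-dimensional symplectic $\ZZ/2$-space of dimension exceeding $g$ cannot be totally isotropic, there exist $u,v\in W$ with $\Omega(u,v)=1$; expanding $u$ and $v$ as sums of $\bar\gamma_i$'s with $i\in S$ and using bilinearity, some pair $i,j\in S$ (necessarily $i\neq j$) has $\Omega(\bar\gamma_i,\bar\gamma_j)=1$, i.e.\ $\gamma_i$ and $\gamma_j$ have odd algebraic intersection number. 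Now set
\[ C:=\gamma_i\,\gamma_j\,\gamma_i^{-1}\,\gamma_j^{-1}, \]
a closed walk of length $2\bigl(|\gamma_i|+|\gamma_j|\bigr)=\LandauO\!\left(\sqrt{n/g}\,\log g\right)\leq\LandauO\!\left(\sqrt{2n/g}\,\log g\right)$. Its homology class is $[\gamma_i]+[\gamma_j]-[\gamma_i]-[\gamma_j]=0$, so $C$ is separating (null-homologous); and $C$ is non-contractible, since if it were null-homotopic then $\gamma_i$ and $\gamma_j$ would commute in $\pi_1(t)$, hence generate an abelian, therefore cyclic, subgroup (abelian subgroups of a genus-$\geq 2$ surface group are cyclic), hence be powers of a common element, which forces their algebraic intersection number to vanish (it is bilinear in homology and every class has zero algebraic self-intersection on an oriented surface) -- contradicting oddness. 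Thus $C$ is a non-contractible separating cycle (an \sncc{}) of the required length. The cases $g\leq 1$ are vacuous (for $g=1$ the asserted bound is $0$), and replacing $n$ by the vertex count $V=n+2-2g\leq 2n$ merely adjusts $c''$. This is morally the same construction as the standard example of a non-contractible separating curve, the commutator of two curves meeting transversely once.

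The main obstacle is the short system of loops: it is the geometric core of \cref{thm_upper} and, in particular, the source of the $\log g$ factor (rather than $\sqrt{\log g}$ or a larger power). I would establish it along the breadth-first-search layering lines of \cite{CVHM14} (refining earlier bounds of Hutchinson): run a BFS from an arbitrary vertex, obtaining layers $L_0,L_1,\dots$ with $\sum_i|L_i|=V\leq 2n$; for a suitable scale $w$, one extracts from short portions of the BFS tree together with single non-tree edges a family of $\gtrsim g$ homologically independent loops through the root, each of length $\LandauO(w)$. The crucial estimate is isoperimetric: a subsurface of $t$ of genus $h$ carrying such loops must contain $\gtrsim h\,w^2$ triangles, so balancing $g\,w^2\lesssim n$ already yields $w\asymp\sqrt{n/g}$; tracking more carefully how the genus of the balls $B(v_0,k)$ grows with the radius $k$ upgrades this to $w\asymp\sqrt{n/g}\,\log g$ and bounds the total length of a full system of $2g$ loops by $\LandauO(\sqrt{gn}\,\log g)$. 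Making this count rigorous with the sharp logarithmic factor (which is known to be unavoidable) is the step requiring real work; once it is in hand, the symplectic argument above finishes the proof.
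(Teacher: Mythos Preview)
The paper does not prove \cref{thm_upper}: it is quoted verbatim as Corollary~3.2(3) of \cite{CVHM14} and used as a black box to obtain the upper bound in \cref{thm_sepsys}. There is therefore no ``paper's own proof'' to compare against.

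Your proposal is a genuine proof strategy rather than a citation, and the reduction is sound. Given a one-vertex cut system of total length $O(\sqrt{gn}\,\log g)$, your pigeonhole correctly extracts $\geq g+1$ basis loops of length $O(\sqrt{n/g}\,\log g)$; the dimension bound on isotropic subspaces then forces two of these to have odd intersection number, and their commutator is a null-homologous closed walk of the required length. The non-contractibility argument (commuting elements of a genus $\geq 2$ surface group lie in a cyclic subgroup, hence have zero intersection pairing) is correct. The caveat is exactly the one you flag: the short-system-of-loops input is itself one of the main theorems of \cite{CVHM14}, proved with the same BFS/layering machinery that yields the separating-systole bound directly. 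So you have traded one black-box citation from \cite{CVHM14} for another from the same paper, and your reduction does not furnish an independent proof---but as a way of \emph{deriving} the separating-systole statement from the cut-system statement, the argument is clean and correct.
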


Therefore, the upper bound in~\cref{thm_sepsys} follows with any $B>c''\sqrt{2/\theta}$.

Now we want to prove the lower bound. By~\Cref{thm_eights}, it suffices to show that, whp, $\bT$ doesn't contain any simple separating cycle, \te{} of \fe{} of length less than $A\log(n)$ for a well chosen $A$. Using the first moment method, we reduce this problem to a counting problem: proving \cref{prop_simple,prop_fat,prop_thin}.

Let $\Ts(2n,g,\ell)$ (resp. $\Tt(2n,g,\ell)$, $\Tf(2n,g,\ell)$) be the number of triangulations of $\T(2n,g)$ with a marked simple \sncc{} (resp. \te{}, \fe{}) of total length $\ell$. 

The method now consists in constructing injective operations to bound the cardinality of these sets. Roughly speaking, we will cut along cycles (and paths) to obtain one or two triangulations with boundaries, and then we will apply the bounds of \cref{sec_enum}.

\subsection{Counting simple cycles}
In this section, we prove that $\bT$ does not contain small simple \sncc{s}, that is: 

\begin{proposition}\label{prop_simple}
Let $A=\frac{1}{2\log\lambda(\theta)}$, we have
\[\sum_{\ell\leq A\log n} \frac{|\Ts(2n,g_n,\ell)|}{\tau(n,g_n)}\to 0\]
as $\nto$.
\end{proposition}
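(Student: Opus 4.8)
The plan is to prove this by a first-moment/cut-and-count argument: cut along the marked simple SNCC to produce a triangulation with two boundaries, bound the number of such triangulations using the enumerative tools of Section~\ref{sec_enum}, and show the resulting sum over $\ell$ is $o(\tau(n,g_n))$.

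Let me set up the cutting operation. Fix a triangulation $t \in \T(2n,g_n)$ together with a marked simple SNCC $c$ of length $\ell$. Since $c$ is simple, separating and non-contractible, cutting the surface along $c$ splits $t$ into two triangulations with one boundary each, say of genus $h_1$ and $h_2$ with $h_1+h_2 = g_n$ and $h_1, h_2 \geq 1$ (both pieces must have positive genus precisely because $c$ is non-contractible and separating — a piece of genus $0$ with a single boundary would make $c$ contractible). Each piece has a boundary of length $\ell$, and if the first piece has $m_1$ internal triangles and the second has $m_2$, then $m_1 + m_2 = 2n$. To make the operation injective we remember which side of $c$ the root of $t$ lies on, mark the root edges of the two boundaries appropriately (using the root edge of $t$ for the piece containing it, and the first edge of $c$ for the other), and record one bit for the side. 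This gives an injection
\[
\Ts(2n,g_n,\ell) \hookrightarrow \bigcup_{\substack{m_1+m_2=2n\\ h_1+h_2=g_n,\ h_i\geq 1}} 2\,\ell \cdot \T_\ell(m_1,h_1) \times \T_\ell(m_2,h_2),
\]
where the factor $2\ell$ accounts for the side bit and the choice of starting vertex of $c$ as a root on the second piece (one can be cruder here; any polynomial factor is harmless).

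Now I apply \cref{lem_filling_boundaries} with $k=1$ to each factor: $|\T_\ell(m_i,h_i)| \leq \tau\!\left(\frac{m_i+\ell}{2}, h_i\right)$. Writing $n_i = \frac{m_i+\ell}{2}$, so that $n_1 + n_2 = n + \ell$, we get
\[
|\Ts(2n,g_n,\ell)| \leq 2\ell \sum_{\substack{n_1+n_2=n+\ell\\ h_1+h_2=g_n,\ h_i\geq 1}} \tau(n_1,h_1)\,\tau(n_2,h_2).
\]
The double sum is exactly of the form controlled by \cref{lem_asympto_cut} (with $a=b=0$), after absorbing the small shift $\ell \leq A\log n$ in the total size — here I would note that \cref{lem_asympto_cut} is stated for $n_1+n_2 = n$, so I either reprove it verbatim for $n+\ell$ in place of $n$ (the proof goes through unchanged since $\ell = O(\log n)$ and $(n+\ell)/n \to 1$, $g_n/(n+\ell)\to\theta$), or invoke \cref{lem_asympto_genus_ratio} to compare $\tau(\cdot, \cdot)$ at size $n+\ell$ with size $n$. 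Either way this yields $\sum_{n_1+n_2=n+\ell}\sum_{h_i\geq 1} \tau(n_1,h_1)\tau(n_2,h_2) \leq (n+\ell)^{-2+o(1)}\tau(n+\ell, g_n)$. Then \cref{lem_asympto_genus_ratio} with $h=0$ gives $\tau(n+\ell,g_n) \leq n^{o(1)} \lambda(\theta)^{-\ell}\tau(n,g_n)$. Collecting everything,
\[
\frac{|\Ts(2n,g_n,\ell)|}{\tau(n,g_n)} \leq \ell \cdot n^{-2+o(1)} \lambda(\theta)^{-\ell},
\]
uniformly for $\ell \leq A\log n$.

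Finally I sum over $\ell$. With $A = \frac{1}{2\log\lambda(\theta)}$ — noting $\lambda(\theta) < 1$ so $\log\lambda(\theta) < 0$ and $A < 0$; the intended reading is that for $\ell \leq A\log n$ we have $\lambda(\theta)^{-\ell} \leq \lambda(\theta)^{-A\log n} = n^{-A\log\lambda(\theta)} = n^{1/2}$ — the bound becomes $\frac{|\Ts(2n,g_n,\ell)|}{\tau(n,g_n)} \leq \ell\, n^{-2+1/2+o(1)} = n^{-3/2+o(1)}$, and summing the at most $A\log n$ values of $\ell$ (plus one more polynomial-in-$\log n$ factor) still gives $n^{-3/2+o(1)} \to 0$. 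The main obstacle, and the only genuinely nontrivial point, is making the cutting operation properly injective while tracking genus, size and the root data — in particular justifying that both sides have genus $\geq 1$ (so that \cref{lem_asympto_cut}, which requires $h_1,h_2\geq 1$, applies) — and confirming that the $O(\log n)$ shift in size is harmless for the enumerative lemmas of Section~\ref{sec_enum}; the arithmetic of the exponents is then routine.
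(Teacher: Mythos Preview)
Your approach is essentially identical to the paper's: cut along the simple SNCC to obtain two triangulations with a boundary each, apply \cref{lem_filling_boundaries}, then combine \cref{lem_asympto_cut} (at size $n+\ell$) with \cref{lem_asympto_genus_ratio}. The only slip is in the bookkeeping of the injection: the original root of $t$ need not lie on $c$, so the piece containing it is \emph{not} an element of $\T_\ell(m_1,h_1)$ as defined in the paper (the root must be on the boundary). The paper fixes this by rerooting on an edge of $c$ and remembering the original root as a marked oriented edge, which costs a factor $6n$ rather than your $2\ell$; with the correct factor the per-term bound is $n^{-1/2+o(1)}$ instead of $n^{-3/2+o(1)}$, but the conclusion is of course unaffected.
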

Let us start with an inequality that holds for all $n,g,\ell$.

\begin{lemma}\label{lem_simple}
We have 
\[|\Ts(2n,g,\ell)|\leq 6n\sum_{n_1+n_2=n+\ell}\sum_{g_1+g_2=g\atop g_1,g_2\geq 1}\tau(n_1,g_1)\tau(n_2,g_2)\]
\end{lemma}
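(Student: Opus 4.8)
The plan is to construct an injective "cutting" operation from the set $\Ts(2n,g,\ell)$ of triangulations with a marked simple \sncc{} into a set of pairs of triangulations with one boundary each, and then to control the small overcounting factors.

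\textbf{Step 1: Cutting along the cycle.} Given a triangulation $t\in\T(2n,g)$ together with a marked simple separating non-contractible cycle $c$ of length $\ell$, I would cut the surface along $c$. Because $c$ is simple, this is a clean operation: it turns $c$ into two boundary curves, one on each side, each of length $\ell$. Since $c$ is separating, the result is \emph{two} connected triangulations with one boundary each, say of sizes $2n_1$ and $2n_2$ faces and genera $g_1$ and $g_2$, with $g_1+g_2=g$; and since $c$ is non-contractible on each side (else $c$ would be contractible in $t$, contradicting that it is a \sncc{}), both $g_1,g_2\geq 1$. The boundary on each side has length $\ell$, so the total number of faces is preserved, giving, after the standard accounting for the number of vertices/edges of a triangulation with a boundary of perimeter $\ell$, the relation $n_1+n_2=n+\ell$ (each boundary face of degree $\ell$ contributes as if it were "$\ell/2$ extra faces" in the convention of $\tau(\cdot,\cdot)$; I would just match the Euler-characteristic bookkeeping used in the definition of $\T_\ell(m,g)$ and $\tau$). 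We also need to root each of the two boundaries; the natural choice is the edge of $c$ containing the root vertex of $c$ (the marked cycle comes with a distinguished starting oriented edge), which fixes the root on one side canonically, and on the other side we may use the same edge.

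\textbf{Step 2: Counting the overcounting.} The map $t\mapsto$ (two rooted triangulations with boundary) is not quite injective because reconstructing $t$ requires choosing how to reglue the two boundary cycles, i.e., a cyclic offset, and also which boundary goes on which "side" and where the root edge of $t$ sits. I would argue that from the two rooted bounded triangulations one recovers $t$ up to: the choice of gluing rotation of the two length-$\ell$ boundaries (at most $\ell$ choices, but if both are rooted this is already determined — so really the loss is in forgetting the root of $t$), the choice of the root edge and orientation of $t$ ($6n$ choices, since a triangulation with $2n$ faces has $6n$ oriented edges), and possibly a factor $2$ for the labelling of the two pieces. Bounding all of these crudely by the factor $6n$ stated in the lemma, we get
\[
|\Ts(2n,g,\ell)|\leq 6n\sum_{n_1+n_2=n+\ell}\sum_{\substack{g_1+g_2=g\\ g_1,g_2\geq 1}}|\T_\ell(2n_1,g_1)|\cdot|\T_\ell(2n_2,g_2)|,
\]
and then one drops the perimeter constraint and bounds $|\T_\ell(2n_i,g_i)|\leq\tau(n_i,g_i)$ — this is immediate since forgetting the boundary structure (or rather, the "a bounded triangulation embeds into a closed one of no larger size" type bound) is covered by the elementary estimates of \cref{sec_enum}; in fact $|\T_\ell(2n_i,g_i)|\le\tau(n_i,g_i)$ can be seen directly, or one can invoke \cref{lem_filling_boundaries} with $k=1$, which gives $|\T_\ell(m,g)|\le\tau(\frac{m+\ell}2,g)$, and re-index so the sum is over $n_1+n_2=n+\ell$ — matching the statement exactly.

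\textbf{Main obstacle.} The genuinely delicate point is the bookkeeping of the size parameter: making sure that cutting a simple cycle of length $\ell$ out of a triangulation with $2n$ faces yields pieces whose $\tau$-indices sum to $n+\ell$ and not $n$ or $n+\ell/2$, given the slightly unusual convention that $\tau(n,g)$ counts triangulations with $2n$ faces and that $\T_{p}(m,g)$ is indexed by the number $m$ of (triangular) faces. I would pin this down with a careful Euler-characteristic / edge-count computation before anything else. The second, milder subtlety is to make the reconstruction map genuinely well-defined and to verify that the claimed multiplicative loss is really at most $6n$ (rather than, say, $6n\ell$): here the key observation is that marking the \sncc{} in $t$ already encodes the gluing offset, so the only information lost when passing to the \emph{unrooted-at-the-gluing} pair of bounded triangulations is the position of the root of $t$ among its $6n$ oriented edges — everything else (which side is which, where the boundary roots go) can be fixed by a deterministic convention. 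Once these two points are settled, the inequality follows by simply summing over the possible $(n_1,n_2,g_1,g_2)$ and applying the bound $|\T_\ell(2n_i,g_i)|\le\tau(n_i,g_i)$.
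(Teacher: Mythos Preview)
Your approach is exactly the paper's: reroot on the cycle (paying the factor $6n$ to remember the original root as a marked oriented edge), cut along $c$ to get a pair of triangulations with one boundary each, and then apply \cref{lem_filling_boundaries}. The only wrinkle is your size bookkeeping: cutting does not change the number of triangular faces, so the pieces lie in $\T_\ell(m_1,g_1)\times\T_\ell(m_2,g_2)$ with $m_1+m_2=2n$, and the relation $n_1+n_2=n+\ell$ appears only \emph{after} applying \cref{lem_filling_boundaries} (which gives $|\T_\ell(m_i,g_i)|\le\tau(\tfrac{m_i+\ell}{2},g_i)$ and one sets $n_i=\tfrac{m_i+\ell}{2}$); your intermediate claim ``$|\T_\ell(2n_i,g_i)|\le\tau(n_i,g_i)$'' is neither needed nor obviously true, so drop it and go straight through \cref{lem_filling_boundaries} as you yourself suggest at the end.
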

\begin{proof}
We prove the lemma by an injective operation. 

Let $t,c$ be an element of $\Ts(2n,g,\ell)$ where $t$ is the triangulation and $c$ the marked cycle.
Pick an arbitrary edge on the $c$ and declare it is the root, while the original root becomes a marked oriented edge ($6n$ choices). Cut along $c$ to obtain a pair of maps in $\T_\ell(m_1,g_1)\times\T_\ell(m_2,g_2)$ where $m_1+m_2=n$ with $m_1\equiv m_2\equiv \ell \mod 2$ and $g_1+g_2=g$ with $g_1,g_2\geq 1$ . To go back, one simply glues the two boundaries such that the root edges match. Hence we have an injective operation and we have shown that
\[|\Ts(2n,g,\ell)|\leq 6n\sum_{m_1+m_2=2n\atop m_1\equiv m_2\equiv \ell \mod 2}\sum_{g_1+g_2=g\atop g_1,g_2\geq 1}|\T_\ell(m_1,g_1)\times\T_\ell(m_2,g_2)|.\]
Invoking~\cref{lem_filling_boundaries} finishes the proof.
\end{proof}

We can now prove ~\cref{prop_simple}:
\begin{proof}[Proof of~\cref{prop_simple}]
Take $\ell\leq A\log n$, then by \cref{lem_simple,lem_asympto_genus_ratio}, we get
\begin{align*}
\Ts(2n,g_n,\ell)&\leq n^{1+o(1)}\sum_{n_1+n_2=n+\ell}\sum_{g_1+g_2=g\atop g_1,g_2\geq 1}\tau(n_1,g_1)\tau(n_2,g_2) \\
&\leq n^{-1+o(1)}\tau(n+\ell,g_n)= n^{-1+o(1)}\lambda(\theta)^{\ell+o(\ell)}\tau(n,g_n) \leq n^{-1/2+o(1)}\tau(n,g_n).
\end{align*}

where in the last inequality we use the fact that $\lambda(\theta)^{A\log(n)}\leq \sqrt{n}$.
Summing this over all $\ell \leq A\log n$ yields the desired result.
\end{proof}
\subsection{Counting \fe{s}}

In this section, we prove that $\bT$ does not contain small \fe{s}, that is:  

\begin{proposition}\label{prop_fat}
Let $A=\frac{1}{2\log\lambda(\theta)}$, we have
\[\sum_{\ell\leq A\log n} \frac{\Tf(2n,g_n,\ell)}{\tau(n,g_n)}\to 0\]
as $\nto$.
\end{proposition}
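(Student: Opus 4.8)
The plan is to mimic the structure of the proof of~\cref{prop_simple}: first establish a non-asymptotic counting inequality for $\Tf(2n,g,\ell)$ by an injective cutting operation, then feed it into the asymptotic estimates of~\cref{sec_enum}. A \fe{} is a pair $(c,p)$ (or the special transverse-intersection case $c_1,c_2$) whose complement, when we cut along it, behaves topologically like cutting along a figure-eight: the relevant curve $c p r\inv p\inv$ (schematically) is non-separating and has genus one "hidden" in it, so cutting reduces the genus by one and produces a \emph{single} triangulation with one or two boundaries whose total perimeter is $O(\ell)$. Concretely, I would root at an arbitrary edge of the \fe{} ($O(n)$ choices, the old root becoming a marked edge), cut along the two simple cycles $pr\inv$ and $qp\inv$ (of total length $\ell$), and check that the result lies in $\T_{p_1,p_2}(m,g-1)$ or $\T_{p_1}(m,g-1)$ with $m+p_1+p_2 = 2n + O(\ell)$ and $p_1,p_2 = O(\ell)$; the reverse gluing recovers $(t,c,p)$, so the map is injective. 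Applying~\cref{lem_filling_boundaries} then gives a bound of the shape
\[
\Tf(2n,g,\ell) \leq (Cn)^{O(1)} \sum_{n' = n + O(\ell)} \tau(n', g-1),
\]
where the sum has $O(\ell)$ terms.

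Next, I would invoke~\cref{lem_asympto_genus_ratio} with $h=1$: for each $n' = n+\ell'$ with $|\ell'| \le C\log n$ we have $\tau(n+\ell', g_n - 1)/\tau(n,g_n) \le n^{-2+o(1)}\lambda(\theta)^{-\ell'}$, uniformly. Since $\ell \le A\log n$ and $\lambda(\theta)^{A\log n} \le \sqrt n$ (this is exactly the choice $A = \tfrac1{2\log\lambda(\theta)}$, noting $\lambda(\theta)<1$ so $\log\lambda(\theta)<0$ and the inequality must be read with the correct sign — I will double-check the constant there), the $\lambda(\theta)^{-\ell'}$ factor is at most $n^{1/2+o(1)}$. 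Collecting the polynomial losses from the rooting choices, from~\cref{lem_filling_boundaries}, and from the $O(\ell) = O(\log n) = n^{o(1)}$ terms in the sum, one finds $\Tf(2n,g_n,\ell)/\tau(n,g_n) \le n^{-2 + 1 + 1/2 + O(1) + o(1)}$; I expect the net exponent to come out negative (around $-1/2$, as in~\cref{prop_simple}), so that summing over $\ell \le A\log n$ still gives $o(1)$. If the polynomial bookkeeping is too lossy I would instead cut along a single cycle of length $\le \ell$ that already witnesses the genus drop, keeping only one boundary and hence only one factor from~\cref{lem_filling_boundaries}.

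The main obstacle is the topological/combinatorial verification that cutting along a \fe{} genuinely drops the genus by exactly one and yields a \emph{connected} triangulation with simple, disjoint boundaries in the sense required by the definition in~\cref{sec_enum} (boundaries simple, no shared vertices), together with a clean account of how the perimeters and face-count transform — in particular handling the special "transverse intersection at a single vertex" case and the subcase where $p$ meets $c$ only at its two endpoints. Making the cut produce \emph{simple} boundaries may require a small perturbation/duplication of the cut locus (as is standard when cutting maps along non-simple objects), which inflates the perimeters by a bounded multiplicative factor; since all perimeter bounds above only need $p_1+p_2 = O(\ell)$, this is harmless, but it must be stated carefully. Once the cutting lemma is in place, the asymptotic part is essentially identical to the proof of~\cref{prop_simple} and of~\cref{lem_asympto_cut_2}.
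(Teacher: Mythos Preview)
Your proposal is correct and follows essentially the same route as the paper: cut along the \fe{} to drop the genus by exactly one, land in a single triangulation with boundary of total perimeter $O(\ell)$, apply \cref{lem_filling_boundaries}, then \cref{lem_asympto_genus_ratio} with $h=1$, and sum. The paper resolves your two stated uncertainties cleanly: rather than cutting along $pr^{-1}$ and $qp^{-1}$ simultaneously, it cuts first along the simple non-separating cycle $c$ (yielding two simple boundaries of size $|c|$, genus drops by one, the path $p$ now joins the two boundaries), and then slits $p$ open to merge the two boundaries into a single boundary of size $2\ell$ --- this sidesteps the ``simple, disjoint boundaries'' issue you flagged. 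For the polynomial bookkeeping, the only factor of $n$ is the single $6n$ from rerooting; all other choices (one marked edge and four marked vertices on the boundary) cost $\ell^{O(1)}=n^{o(1)}$, giving precisely $|\Tf(2n,g,\ell)|\le 192\,n\,\ell^5\,\tau(n+\ell,g-1)$ and hence the net exponent $1-2+\tfrac12+o(1)=-\tfrac12+o(1)$ you anticipated.
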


Let us start with an inequality that holds for all $n,g,\ell$.

\begin{lemma}\label{lem_fat}
We have 
\[|\Tf(2n,g,\ell)|\leq 192n\ell^5\tau(n+\ell,g-1)\]
\end{lemma}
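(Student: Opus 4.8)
I want to bound $|\Tf(2n,g,\ell)|$, the number of triangulations of genus $g$ with $2n$ faces carrying a marked fat eight of total length $\ell$. The strategy, following the paradigm announced before the proof, is to describe an injective "cut" operation turning such a marked object into a triangulation with boundaries of smaller genus, then invoke \cref{lem_filling_boundaries} to pass back to boundaryless triangulations. A fat eight is a pair $(c,p)$ (or the special tangential-intersection case $(c_1,c_2)$) where $c=qr^{-1}$ with $q,r$ simple, $p$ a simple path meeting $c$ only at its two endpoints, and both cycles $pr^{-1}$ and $qp^{-1}$ non-contractible. Topologically, the union $c\cup p$ is a figure-eight graph embedded in the surface, and since the two constituent simple loops $pr^{-1}$ and $qp^{-1}$ intersect transversally at a single vertex (this is what "fat" means), cutting the surface along this figure-eight removes a handle: the result is a surface of genus $g-1$ with a single boundary. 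So I expect to cut along $c\cup p$ to obtain one element of $\T_{p_1}(m,g-1)$ with a single boundary.

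**The counting.** Let me count the data. Given $(t,(c,p))\in\Tf(2n,g,\ell)$: first I re-root. I can choose a new root edge among the $\ell$ edges of the figure-eight (or a bounded multiple thereof, accounting for orientation — say $2\ell$ choices), and record the old root as a marked oriented edge ($6n$ choices total for the pair). I also need to remember enough combinatorial data to reconstruct how the figure-eight sat in $t$: where the two "halves" $q,r$ of $c$ begin and end along $p$, and at which vertex of $c$ the path $p$ terminates. These are positions along a path of length $\le\ell$, so each costs a factor $O(\ell)$; altogether I expect something like $\ell^4$ or $\ell^5$ bookkeeping factors — matching the $\ell^5$ in the statement. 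After cutting, I get a map with one simple boundary of some length $p_1\le 2\ell$ (the boundary is traced by going around both sides of the cut figure-eight), with $m=2n+2\ell'$ faces for the appropriate $\ell'=O(\ell)$ and genus $g-1$. The operation is injective: from the boundaried map plus the recorded gluing data one re-glues uniquely. Hence
\[
|\Tf(2n,g,\ell)|\;\le\; 6n\cdot C\ell^4\sum_{p_1\le 2\ell}|\T_{p_1}(m_{p_1},g-1)|
\]
for an absolute constant $C$, and then \cref{lem_filling_boundaries} with $k=1$ gives $|\T_{p_1}(m_{p_1},g-1)|\le\tau\big(\tfrac{m_{p_1}+p_1}{2},g-1\big)=\tau(n+\ell,g-1)$ once the face/boundary arithmetic is tracked carefully (the Euler characteristic drops by $2$: one unit from filling the boundary, one from the genus decrement, which is consistent with the "$+\ell$" shift). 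Summing the $O(\ell)$ terms over $p_1$ and absorbing constants yields $|\Tf(2n,g,\ell)|\le 192n\ell^5\tau(n+\ell,g-1)$.

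**Main obstacle.** The delicate point is the topology: I must verify that cutting along the fat eight genuinely lowers the genus by exactly one and leaves a \emph{connected} surface with a \emph{single simple} boundary, and that the cut map is a genuine triangulation with one boundary in the sense of \cref{sec_enum} (boundary simple, rooted). This relies on the defining non-contractibility conditions of the fat eight — in particular transversality at the intersection point — which is precisely why \te{s} and \fe{s} were separated into two cases. I should handle the "regular" case $(c,p)$ and the special tangential-vs-transversal case $(c_1,c_2)$ uniformly by noting both produce a figure-eight with transversal self-intersection. A secondary nuisance is the exact value of the face-count shift and making sure the parity/arithmetic in \cref{lem_filling_boundaries} comes out to exactly $n+\ell$; this is routine Euler-characteristic bookkeeping, and any slack is harmlessly absorbed into the polynomial prefactor, so I would not belabor it. The constant $192$ and exponent $5$ are generous upper bounds, not tight, so I would aim only to produce \emph{some} bound of the form $(\text{poly in }n,\ell)\cdot\tau(n+\ell,g-1)$ and then check the constants suffice.
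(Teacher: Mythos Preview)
Your approach is the same as the paper's: cut along the fat eight to obtain a single triangulation with one boundary and genus $g-1$, then invoke \cref{lem_filling_boundaries}. The paper does this in two steps---first cut along the non-separating simple cycle $c$ (genus drops to $g-1$, two boundaries of length $|c|$ appear), then slit along $p$ (the two boundaries merge into one of length $2\ell$)---which yields exactly $|\Tf(2n,g,\ell)|\le 6n\cdot 2\ell\cdot(2\ell)^4\cdot|\T_{2\ell}(2n,g-1)|=192n\ell^5\,|\T_{2\ell}(2n,g-1)|$, and then \cref{lem_filling_boundaries} with $k=1$ gives $\tau(n+\ell,g-1)$ on the nose (no sum over $p_1$ is needed: the boundary length is always exactly $2\ell$).

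Your topological description, however, is garbled. In the generic case the union $c\cup p$ is a \emph{theta-graph} (two trivalent vertices joined by three arcs $p,q,r$), not a figure-eight; the figure-eight picture only arises in the degenerate special case where the endpoints of $p$ coincide. More importantly, the loops $pr^{-1}$ and $qp^{-1}$ do not ``intersect transversally at a single vertex''---they share the entire arc $p$. So your stated reason for the genus drop is wrong. The correct key fact, which the paper isolates via the two-step cut, is that after cutting along $c$ the path $p$ must land with its two endpoints on \emph{different} boundary components; this is what the fat-eight condition (non-contractibility of both $pr^{-1}$ and $qp^{-1}$, respectively transversality in the degenerate case) buys you, and it is precisely what distinguishes the fat-eight surgery from the thin-eight one. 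You correctly flag this as the main obstacle, but the justification you sketch does not address it.
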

\begin{proof}

We prove the lemma by an injective operation. It is (sketchily) illustrated in~\cref{fig_fat_cut}.

\begin{figure}
\center
\includegraphics[scale=1]{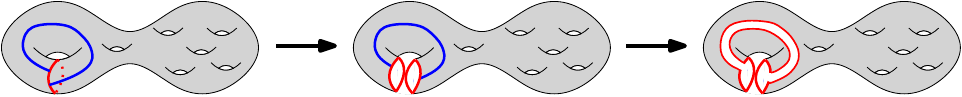}
\caption{Cutting along a \fe{}.}\label{fig_fat_cut}
\end{figure}

Let $t,c,p$ be an element of $\Tf(2n,g,\ell)$ where $t$ is the triangulation and $c,p$ is the \fe{}. As in the previous section, pick an arbitrary edge of $c$ to become the root, and the original root is now an marked oriented edge ($6n$ choices). Cut along $c$ (recall it is non separating) to obtain a triangulation of $\T_{|c|,|c|}(2n,g-1)$ with a marked path $p$. By definition of a \fe{}, $p$ must join both boundaries. So far, we have an injective operation because to go back one just needs to glue the two boundaries such that the roots coincide. Now, cut along $p$ to obtain a triangulation of $\T_{2\ell}(2n,g-1)$: one of the two roots becomes a marked edge on the boundary ($2\ell$ choices), and to remember how to close $p$ and go backwards, one can for instance\footnote{actually, we are marking more than we need to go back, but this won't matter in the asymptotic analysis.} mark $4$ vertices of the boundary, corresponding to both endpoints of $p$ before it was split ($(2\ell)^4$ choices).

Therefore
\[|\Tf(2n,g,\ell)|\leq 6n\times 2\ell\times (2\ell)^4\times\T_{2\ell}(2n,g-1)=  192 n\ell^5\T_{2\ell}(2n,g-1).\]
Invoking~\cref{lem_filling_boundaries} finishes the proof.
\end{proof}

We can now prove ~\cref{prop_fat}:
\begin{proof}[Proof of~\cref{prop_fat}]
Take $\ell\leq A\log n$, then by \cref{lem_fat,lem_asympto_genus_ratio}, we get
\[\Tf(2n,g_n,\ell)\leq n^{1+o(1)}\tau(n+\ell,g_n-1)\leq n^{1+o(1)}n^{-2+o(1)} \lambda(\theta)^\ell e^{o(\ell)}\leq n^{-1/2+o(1)}. \]
where in the last inequality we use the fact that $\lambda(\theta)^{A\log(n)}\leq \sqrt{n}$.
Summing this over all $\ell \leq A\log n$ yields the desired result.
\end{proof}

\subsection{Counting \te{s}}
In this section, we prove that $\bT$ does not contain small \te{s}, that is:
\begin{proposition}\label{prop_thin}
Let $A=\frac{1}{2\log\lambda(\theta)}$, we have
\[\sum_{\ell\leq A\log n} \frac{\Tt(2n,g_n,\ell)}{\tau(n,g_n)}\to 0\]
as $\nto$.
\end{proposition}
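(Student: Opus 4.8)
The plan is to mirror the structure of the proofs of \cref{prop_simple,prop_fat}: first prove a combinatorial inequality bounding $|\Tt(2n,g,\ell)|$ by a sum over ways of distributing size and genus among one or two triangulations with boundaries, then feed this into the enumerative estimates of \cref{sec_enum} and conclude by a first moment computation with $A=\frac{1}{2\log\lambda(\theta)}$. The main subtlety compared to the \fe{} case is that a \te{} $(c_1,c_2,p)$ involves two disjoint non-homotopic simple non-separating cycles together with a connecting path, so cutting can disconnect the surface in several topologically distinct ways; I would need to case on whether cutting along $c_1$, $c_2$ (and $p$) leaves the surface connected or splits it into two pieces.

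Concretely, I would first state and prove a lemma of the form
\begin{lemma*}
\[|\Tt(2n,g,\ell)|\leq C\,n\,\ell^{5}\left(\tau(n+\ell,g-2)+\sum_{n_1+n_2=n+\ell}\sum_{g_1+g_2=g\atop g_1,g_2\geq 1}\tau(n_1,g_1)\tau(n_2,g_2)\right)\]
\end{lemma*}
for some absolute constant $C$. The proof is an injective cutting operation: pick a root edge on $c_1$ (paying a factor $6n$ and turning the old root into a marked edge), cut along $c_1$ (non-separating, so genus drops by $1$ and two boundaries of size $|c_1|$ appear), then cut along $c_2$. Since $c_1$ and $c_2$ are not homotopic, cutting along $c_2$ either drops the genus by another $1$ (keeping the surface connected) or disconnects it into two positive-genus pieces; either way the total number of faces is still $2n$ and the genus/size bookkeeping gives the two terms above. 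Finally cut along $p$, which joins $c_1$-boundary to $c_2$-boundary, opening them into a single boundary (or one boundary on each piece); as in \cref{lem_fat} one marks $O(\ell)$ vertices/edges (the endpoints of the various paths before splitting, plus a boundary root) to be able to reverse the operation, contributing a polynomial factor $\ell^{O(1)}$ — I will be generous and write $\ell^5$. Then \cref{lem_filling_boundaries} converts the triangulations-with-boundaries into the $\tau$'s.

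With the lemma in hand, the proof of \cref{prop_thin} is immediate: for $\ell\leq A\log n$, the connected term is handled exactly as in \cref{prop_fat} but with $g_n-2$ in place of $g_n-1$, giving by \cref{lem_asympto_genus_ratio} a bound $n^{1+o(1)}\tau(n+\ell,g_n-2)\leq n^{1+o(1)}n^{-4+o(1)}\lambda(\theta)^{\ell}e^{o(\ell)}\leq n^{-5/2+o(1)}$; the disconnected term is handled exactly as in \cref{prop_simple} via \cref{lem_asympto_cut} (with $a=b=0$, absorbing the $n\ell^{5}$ prefactor into $n^{o(1)}$), giving $n^{1+o(1)}\cdot n^{-2+o(1)}\tau(n+\ell,g_n)=n^{-1+o(1)}\lambda(\theta)^{\ell+o(\ell)}\tau(n,g_n)\leq n^{-1/2+o(1)}\tau(n,g_n)$, using $\lambda(\theta)^{A\log n}\leq\sqrt n$. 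Summing over the at most $A\log n$ values of $\ell$ kills the $n^{o(1)}$ and yields the claimed convergence to $0$.

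The step I expect to be the main obstacle is the careful verification of the cutting map in the \te{} lemma: one must check that the operation is genuinely injective after recording only polynomially many marks, and in particular that the two topological outcomes of cutting $c_1,c_2$ (connected genus-$(g-2)$ versus disconnected) are correctly enumerated — in the connected case one gets a triangulation in $\T_{|c_1|,|c_1|,|c_2|,|c_2|}(2n,g-2)$ with the path $p$ marked, and the disjointness hypotheses in the definition of \te{} (the cycles and path share no vertices beyond the prescribed endpoints) are exactly what guarantees the boundaries produced are simple and vertex-disjoint, so that \cref{lem_filling_boundaries} applies. Once that bookkeeping is pinned down, everything else is a routine repetition of the arguments already used for \cref{prop_simple,prop_fat}.
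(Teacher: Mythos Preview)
Your strategy is the paper's: establish a cutting lemma analogous to \cref{lem_thin}, then feed it into \cref{lem_asympto_cut,lem_asympto_genus_ratio} exactly as in \cref{prop_simple,prop_fat}. The order of surgery differs slightly (the paper slits $p$ \emph{before} cutting $c_2$, so that $c_2$ already touches a boundary and the merging is automatic), but your order $c_1,c_2,p$ works just as well. Two bookkeeping errors need fixing, though neither ultimately kills the conclusion.

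First, after cutting $c_1$ the genus is already $g-1$, so when $c_2$ separates the cut surface, the two pieces satisfy $g_1+g_2=g-1$, not $g$; this is what the paper's \cref{lem_thin} records. Your own topological argument (each piece inherits one $c_1$-boundary and one $c_2$-boundary, so a genus-$0$ piece would be an annulus forcing $c_1\sim c_2$) in fact proves $g_1+g_2=g-1$ with both $\ge 1$. Second, and more substantively, your lemma drops the $(3m+3p)^{k-1}$ factor from \cref{lem_filling_boundaries}. After all cuts the connected case carries \emph{three} boundaries, so \cref{lem_filling_boundaries} contributes an extra $(6(n+\ell))^{2}$ (the paper's \cref{lem_thin} even writes $(6(n+\ell))^{3}$); in the disconnected case one piece keeps two boundaries, which is the origin of the factor $6n_1$ inside the paper's sum and forces \cref{lem_asympto_cut} with $a=1$ rather than your $a=b=0$. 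Once these factors are restored, the connected contribution becomes $n^{3+o(1)}\tau(n+\ell,g_n-2)\le n^{-1/2+o(1)}\tau(n,g_n)$ (not the $n^{-5/2+o(1)}$ you wrote) and the disconnected contribution is $\le n^{-3/2+o(1)}\tau(n,g_n)$; both still sum to $o(1)$ over $\ell\le A\log n$, so \cref{prop_thin} survives.
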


Let us start with an inequality that holds for all $n,g,\ell$.

\begin{lemma}\label{lem_thin}
We have 
\[|\Tt(2n,g,\ell)|\leq 6n(6(n+\ell))^3(2\ell)^7\tau(n+\ell,g-2)+ 6n(2\ell)^7\sum_{n_1+n_2=n+\ell}\sum_{g_1+g_2=g-1\atop g_1,g_2\geq 1}6n_1\tau(n_1,g_1)\tau(n_2,g_2)\]
\end{lemma}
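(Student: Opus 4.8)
The plan is to prove \cref{lem_thin} by the same cut-and-mark strategy used for \cref{lem_simple} and \cref{lem_fat}, but now keeping careful track of the topology, since cutting along a \te{} $(c_1,c_2,p)$ (two disjoint non-homotopic simple non-separating cycles joined by a simple path) drops the genus by at most $2$ but may or may not disconnect the surface. First I would fix an element $(t,c_1,c_2,p)\in\Tt(2n,g,\ell)$, re-root $t$ at an arbitrary edge of $c_1\cup c_2$ so that the old root becomes a marked oriented edge ($6n$ choices), and then cut along $c_1$. Since $c_1$ is non-separating, this yields a triangulation with two boundaries of length $|c_1|$, still carrying $c_2$ (now disjoint from both new boundaries) and $p$ (now a path from a boundary to an interior point on $c_2$, or between the two boundaries — but by the structure of a \te{}, $p$ starts on $c_1$ and ends on $c_2$, so after cutting $c_1$ its startpoint lies on one of the two new boundaries). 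The key case split is whether $c_2$ is separating \emph{in the cut surface}: if it is not, cutting along $c_2$ as well drops the genus by a further $1$ and keeps the surface connected, landing us in $\T_{|c_1|,|c_1|,|c_2|,|c_2|}(2n,g-2)$; if it is, then cutting along it disconnects, landing us in a product $\T_{\dots}(2n_1,g_1)\times\T_{\dots}(2n_2,g_2)$ with $g_1+g_2=g-1$ and $g_1,g_2\geq 1$ (both parts have positive genus because $c_1$ contributed a handle to one side and $c_2$ is non-separating-but-made-separating only by the first cut, which forces positive genus on each piece — this is exactly the "transverse intersection" intuition behind a \te{} being genuinely two-handled).

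Next I would dispose of the path $p$ and the gluing data. After all the cutting, I cut along $p$ too (it is simple and disjoint from the relevant boundaries except at its two endpoints), which merges some boundary structure; to be able to invert the whole operation I mark a bounded number of vertices/edges on the boundaries recording where the endpoints of $p$ sat and which root on each cut cycle to identify — by the same accounting as in \cref{lem_fat}, marking the endpoints of $p$ on the boundary costs at most $(2\ell)$ for a root edge and $(2\ell)^k$ for a bounded number $k$ of extra boundary vertices; here the bookkeeping for two cut cycles and one cut path needs $k\le 6$ extra marks plus one root, giving the $(2\ell)^7$ factor. The operation is injective because from the marked triangulation(s) with boundaries one reconstructs $t$ uniquely: re-glue along $p$ using the marked endpoint data, then re-glue the two $|c_2|$-boundaries (resp. identify the two pieces along them) using the marked root, then re-glue the two $|c_1|$-boundaries, and finally restore the original root from the marked oriented edge. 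Since $n_1+n_2$ (or the single face count) equals $n+\ell$ after the standard "filling boundaries adds $\ell$ faces" computation — $c_1$ and $c_2$ have total length $\le \ell$ so filling their four boundary faces via \cref{lem_filling_boundaries} with $p=|c_1|+|c_1|+|c_2|+|c_2|\le 2\ell$ contributes the shift — I would collect the polynomial prefactors: $6n$ for the root, $(6(n+\ell))^3$ from \cref{lem_filling_boundaries} with $k=4$ boundaries in the connected case (resp. $6n_1$ with $k=2$ on the positive-genus side and nothing extra on the other, summed over $n_1+n_2=n+\ell$), and $(2\ell)^7$ for the $p$-reconstruction marks, exactly matching the two terms in the statement.

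The main obstacle will be the topological case analysis — specifically, justifying cleanly that cutting along $c_1$ then $c_2$ either lowers the genus by exactly $2$ without disconnecting, or disconnects into two pieces of genus summing to $g-1$ with \emph{each} piece of positive genus. The "each piece positive genus" claim is what lets the second sum range over $g_1,g_2\ge 1$ rather than $g_1,g_2\ge 0$, and it is not completely automatic: it follows because $c_1$ is non-separating in $t$ (so after the first cut the $c_1$-boundaries are homologically nontrivial, forcing a handle), and because $c_2$ is non-separating in $t$ but, if it separates the $c_1$-cut surface, it must separate the two $c_1$-boundaries onto different sides (otherwise $c_1\cup c_2$ would bound in $t$, contradicting that $(c_1,c_2)$ are non-homotopic non-separating and — in the transverse special case — intersect once); each side then inherits one $c_1$-boundary, and re-gluing that boundary to itself on a surface-with-boundary of genus $g_i$ produces something that, to host a non-separating cycle coming from half of $c_1$, needs $g_i\ge 1$. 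I would isolate this as the one genuinely geometric paragraph and keep everything else as routine prefactor bookkeeping; the rest of the proof (injectivity, face counting, invoking \cref{lem_filling_boundaries}) is parallel to \cref{lem_simple} and \cref{lem_fat} and I would present it tersely.
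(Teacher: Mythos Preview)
Your overall plan is the paper's plan: re-root, cut along $c_1$, $c_2$ and $p$, split into the case where $c_2$ stays non-separating after the first cut (connected, genus $g-2$) versus becomes separating (two pieces, genera summing to $g-1$), and then bookkeep the marks before invoking \cref{lem_filling_boundaries}. The paper cuts in the order $c_1$, then $p$, then $c_2$; you cut $c_1$, then $c_2$, then $p$. Your order is arguably cleaner since $c_1$ and $c_2$ are disjoint, so the four boundaries after the first two cuts are honestly simple and pairwise disjoint (except in the special tangential case). Either order leads to the same prefactors.

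Where your write-up goes wrong is exactly the paragraph you flag as the obstacle. First, the reason $c_2$ must separate the two $c_1$-boundaries onto different sides is not that ``$c_1\cup c_2$ would bound'' or that $c_1,c_2$ are non-homotopic: it is simply that if both $c_1$-boundaries lay on the same side, re-gluing them would exhibit $c_2$ as separating in $t$, contradicting the definition of a \te{}. Second, your argument for $g_i\geq 1$ (``re-gluing that boundary to itself \dots\ to host a non-separating cycle coming from half of $c_1$'') does not make sense --- you cannot re-glue a single $c_1$-boundary to itself, and there is no ``half of $c_1$''. The correct argument is short: if one piece had genus~$0$, it would be an annulus with one boundary coming from $c_1$ and the other from $c_2$, hence $c_1$ and $c_2$ would be freely homotopic in $t$, again contradicting the definition of a \te{}. (In the paper's order the degenerate piece would be a pair of pants rather than an annulus, but the contradiction is the same.) Third, a small slip: in the special case where $c_1$ and $c_2$ share a single vertex, the \te{} is the \emph{tangential} intersection; the transverse case is a \fe{}. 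This matters because tangential contact is what keeps $c_2$ a genuine cycle after cutting $c_1$.

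Fix these three points and your proof goes through and matches the paper's.
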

\begin{proof}
We will write this proof more concisely, as it is of the same nature than the previous ones.
Is is once again an injective operation, illustrated in~\cref{fig_thin_cut}.

\begin{figure}
\centering
\includegraphics[scale=0.7]{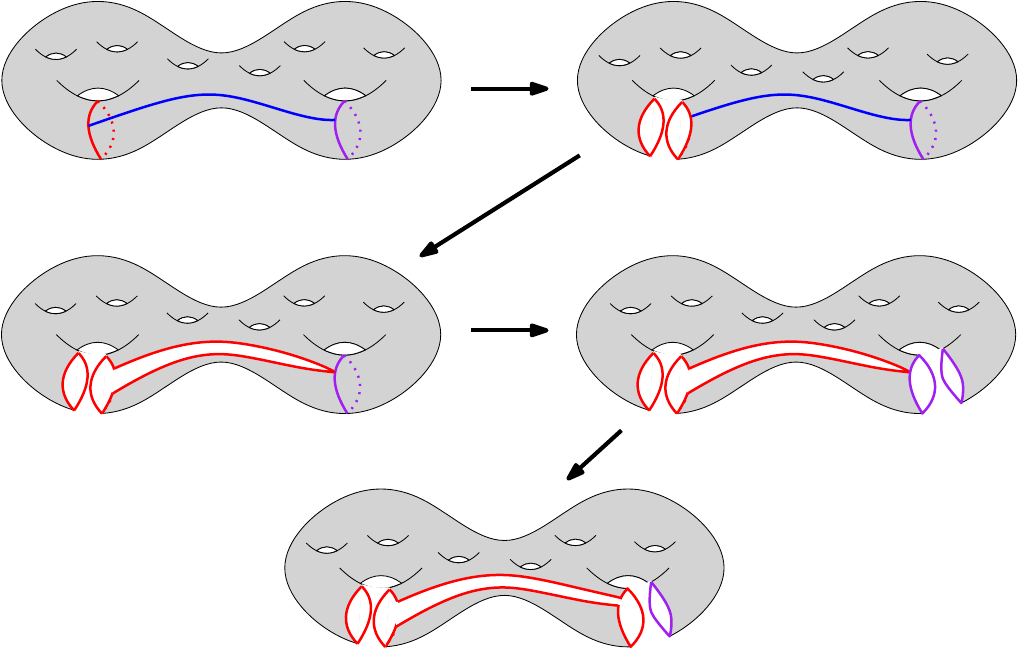}

\vspace{1cm}

\includegraphics[scale=0.7]{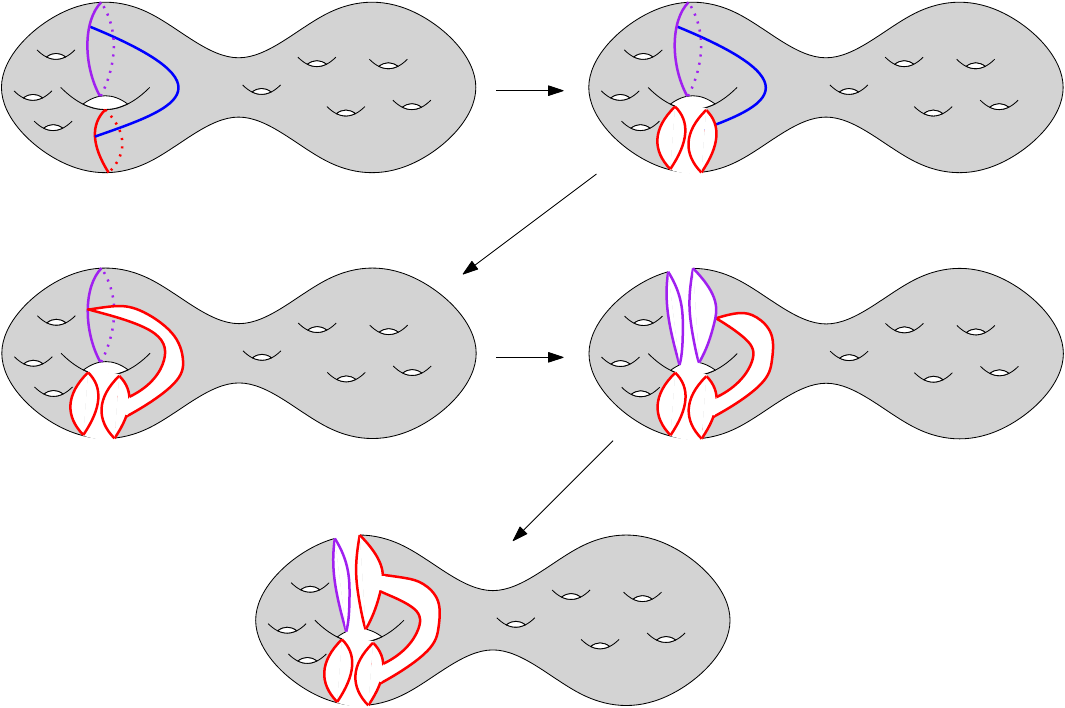}
\caption{Cutting along a \te{}, the two cases.}\label{fig_thin_cut}
\end{figure}
 Given $t$ and its \te{} $c_1,c_2,p$ in $\Tt(2n,g,\ell)$, reroot on $c_1$ and consider the previous root as a marked edge ($6n$ choices). Cut along $c_1$ to obtain a map of $\T_{|c_1|,|c_1|}(2n,g-1)$ with two boundaries, a marked path $p$ joining the first on to a marked cycle $c_2$. Slit open $p$ to obtain a map of $\T_{|c_1|+2|p|,|c_1|}(2n,g)$ with a marked cycle $c_2$ (as in the proof of~\cref{lem_fat}, mark an edge and $4$ vertices on the first boundary to go back, that's $\leq 2^5\ell^5$ choices).

Now, what has become of $c_2$ after all these surgeries ? By definition of a \te{}, it has remained a cycle, even if $p$ was empty. It cannot be contractible (otherwise it was bounding a disk from the start) homotopic to a boundary (otherwise it was homotopic to $c_1$ in the original map). However, it is possible that it has become separating although it started as a non-separating cycle.

There are two cases. If it stayed non-separating, cut along it to obtain a triangulation of genus $g-2$ with four boundaries of total size $2\ell$, two of which intersect at a vertex. Therefore they can be merged as a single boundary by opening at the intersection point (and to go back, one can for instance mark two vertices on that boundary, that is $\leq 4\ell^2$ choices). Using~\cref{lem_filling_boundaries}, this gives the first term in the RHS of the inequality.

If it has become separating, then it must separate the surface into two subsurfaces with both with genus (otherwise, it means that it separates a pair of pants from the rest, but in that case it was a separating cycle in the original map, a contradiction). If we cut along it, we obtain a pair of maps both with genus $1$ or greater, total genus $g-1$, two boundaries each in total, with total size $2\ell$. In one of the maps, the two boundaries touch at a point, hence they can be opened into a single boundary at the cost of remembering two vertices ($\leq 4\ell^2$ choices).
Once again, using~\cref{lem_filling_boundaries}, this gives the second term in the RHS of the inequality.

\end{proof}

The proof of~\cref{prop_thin} follows from applying the same arguments as in the proofs of \cref{prop_fat,prop_simple} to the first and second term in the RHS of the inequality of~\cref{lem_thin}. 

\section{Discussion and conjectures}\label{sec_conj}
\subsection{Enumeration}

By \cref{eq_asymptoo}, we know that $\tau(n,g)=n^{2g}\exp(nf(g/n)+o(n))$. To guess the genus ratio of \cref{thm_ratio}, one could be tempted to ignore the $e^{o(n)}$ and write
\[\frac{n^2\tau(n,g-1)}{\tau(n,g)}\approx \exp(nf(g-1/n)-nf(g/n))\approx \exp(-f'(g/n)).\]

And indeed, after some calculations that we omit here, it is possible to prove that $\psi(\theta)=\exp(-f'(\theta))$ where $\psi$ is the function of \cref{thm_ratio}. In other words, whatever is hidden in the $e^{o(n)}$ of \cref{eq_asymptoo} does not vary too wildly as $n$ and $g$ vary.

\subsection{Geometry}
As we mentioned in the introduction,  the mere existence of a simple \sncc{} in triangulations is still to be proven, this known as \emph{Barnette's conjecture}~\cite{MT01}[Chapter 5].
\begin{conjecture}\label{conj_Barnette}
Let $t$ be an arbitrary triangulation without loops or multiple edges, of genus at least $2$. Then $t$ contains a simple \sncc{}.
\end{conjecture}

So far, this conjecture still stands, despite several important advances (see e.g. \cite{BMR}). Proving that one cannot design an especially evil triangulation contradicting the conjecture seems out of reach for us, but since sampling objects at random often rules out many pathological phenomena, we believe it could apply to random high genus triangulations\footnote{although, since we allow loops and multiple edges, these are not included in Barnette's conjecture.}.

\begin{conjecture}\label{conj_simple}
For every $\theta\in(0,1/2)$, there exists a constant $K$ such that, whp, $\bT$ contains a simple, non-contractible, separating cycle of length less than $K\log(n)$.
\end{conjecture}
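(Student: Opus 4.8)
The plan is to upgrade the first-moment bound behind the lower bound of \cref{thm_sepsys} into an existence statement via the second moment method. For an integer $\ell$ let $N_\ell$ be the number of simple \sncc{s} of length $\ell$ in $\bT$ and put $N=\sum_{\ell\le K\log n}N_\ell$; by the Paley--Zygmund inequality it suffices to exhibit $K$ with $\mathbb E[N]\to\infty$ and $\mathbb E[N^2]=(1+o(1))\mathbb E[N]^2$. The starting point is that cutting along it turns a simple \sncc{} of length $\ell$ in a genus-$g_n$ triangulation into an ordered pair $(t_1,t_2)\in\mathcal T_\ell(m_1,g_1)\times\mathcal T_\ell(m_2,g_2)$ with $m_1+m_2=2n$, $g_1+g_2=g_n$ and $g_1,g_2\ge1$, the constraint $g_i\ge1$ being exactly what forces the cycle to be non-contractible on both sides, hence non-contractible and separating. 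This operation is reversible up to the $O(n\ell)$ rooting and gluing-offset choices, so the inequality of \cref{lem_simple} becomes an equality up to polynomial factors once the uses of \cref{lem_filling_boundaries} in its proof are matched by \emph{lower} bounds on $|\mathcal T_\ell(\cdot,\cdot)|$, and $\mathbb E[N_\ell]=|\Ts(2n,g_n,\ell)|/\tau(n,g_n)$.

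The first step is to establish $\mathbb E[N]\to\infty$. This needs a lower bound on $|\mathcal T_\ell(m,g)|$ complementing \cref{lem_filling_boundaries}, of the shape $|\mathcal T_\ell(m,g)|\ge \ell^{-O(1)}\,\tau(\tfrac{m+\ell}{2},g)$, uniform over the relevant range with $\ell=\Theta(\log n)$. Such a bound should follow by cutting out of a triangulation a contractible $\ell$-cycle bounding a small triangulated disk — an essentially invertible move — once one checks that enough such configurations are present, for which the near-planarity of macroscopic regions of $\bT$ (in the spirit of \cite{Louf}) is helpful. Feeding this bound, together with \cref{eq_asymptoo,eq_ratioo,prop_asympto_ratio} and a lower-bound analogue of the splitting estimate \cref{lem_asympto_cut}, into the expression for $\mathbb E[N_\ell]$ coming from the gluing bijection — a sum over $m_1+m_2=2n$, $g_1+g_2=g_n$, $g_1,g_2\ge1$ of $|\mathcal T_\ell(m_1,g_1)|\,|\mathcal T_\ell(m_2,g_2)|$, times a polynomial prefactor — should yield $\mathbb E[N_\ell]\ge n^{\,c(\theta)-o(1)}$ for $\ell$ in a window around $K\log n$, provided $K$ is chosen so that the resulting exponent $c(\theta)$ is positive; note that $K$ must exceed the constant $A$ of \cref{prop_simple}, which is consistent since that proposition only forbids simple \sncc{s} of length \emph{below} $A\log n$.

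The main obstacle is the second moment. One must count \emph{pairs} of simple \sncc{s} of length $O(\log n)$, i.e.\ simultaneously cut $\bT$ along two such cycles and enumerate the resulting pieces. When the two cycles can be taken disjoint and in homologically generic position the count factorises and gives $\mathbb E[N^2]=(1+o(1))\mathbb E[N]^2$; the real work is in bounding the correlated configurations — two cycles that cross, or that are nested so that one of the (up to) four pieces is pinched to small size or small genus — by $o(\mathbb E[N]^2)$. Because "separating" is a global homological condition, the topological bookkeeping for two interacting separating cycles is considerably heavier than the single-cycle surgeries of \cref{sec_enum}, and controlling the error terms uniformly over the $O(\log n)$-size window and over all genus splittings is exactly where the argument would stand or fall; this is why the statement remains conjectural.

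A complementary, more geometric route starts from \cref{thm_eights}: whp the cycle $\CC$ appearing in the proof of \cref{thm_sepsys} is either already simple, or contains a \fe{} or a \te{} of length $O(\log n)$. From a \fe{} $(\alpha,\beta)$ one forms the closed walk $\alpha\beta\alpha\inv\beta\inv$, which is null-homologous, non-contractible (as $g_n\ge2$ prevents $\alpha$ and $\beta$ from commuting in $\pi_1$), and of length $O(\log n)$; smoothing its self-intersections produces a disjoint family of simple cycles bounding a subsurface, and one argues that one of them is a simple \sncc{} of comparable length — which settles this case, as well as the \te{} cases in which cutting along the boundary of a regular neighbourhood of the \te{} leaves a separating component. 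The genuine difficulty is a \te{} whose regular neighbourhood is a pair of pants with \emph{connected} complement: none of its three boundary curves is then separating, and no short separating cycle is visible nearby. Resolving this configuration for random high-genus triangulations is, in spirit, Barnette's conjecture \cite{MT01} for this model, which is precisely what we do not know how to do.
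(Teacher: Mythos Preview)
The statement you are attempting is \emph{Conjecture}~\ref{conj_simple}: the paper does not prove it, it merely states it as an open problem in the discussion section. There is therefore no ``paper's own proof'' to compare your proposal against.

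Your write-up is not a proof either, and you are honest about this: you outline two plausible strategies (a second-moment upgrade of the first-moment bound behind \cref{prop_simple}, and a surgical approach starting from the \fe{}/\te{} dichotomy of \cref{thm_eights}) and then, in each case, you correctly identify the step where the argument currently breaks down --- controlling the correlated pairs of \sncc{s} in the variance computation, and handling a \te{} whose regular neighbourhood has connected complement. Those are genuine obstructions, not technicalities, and you flag them as such (``this is why the statement remains conjectural'', ``which is precisely what we do not know how to do''). So what you have written is a reasonable research plan, consistent with the paper's own assessment that this is open, but it is not a proof and should not be labelled as one.

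One small caution on the first strategy: even the ``easy'' half, namely a matching \emph{lower} bound $\mathbb E[N_\ell]\to\infty$, is not as automatic as you suggest. Your proposed lower bound on $|\mathcal T_\ell(m,g)|$ by carving out a contractible $\ell$-cycle does not obviously produce a boundary that, after regluing, yields a \emph{separating} cycle with positive genus on both sides; and the ``lower-bound analogue of \cref{lem_asympto_cut}'' you invoke would require the double sum $\sum \tau(n_1,h_1)\tau(n_2,h_2)$ with $h_1,h_2\ge1$ to be bounded \emph{below} by $n^{-2+o(1)}\tau(n,g_n)$, which is not established anywhere in the paper (the paper only proves the upper bound). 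These are additional gaps on top of the second-moment obstacle you already name.
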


As for many other geometric quantities, we expect the value of the separating systole to concentrate asymptotically.

\begin{conjecture}\label{conj_constant}
Let $\sep^*$ be the simple separating systole of $\bT$.
For every $\theta\in(0,1/2)$, there exists two constants $0<S(\theta)<S^*(\theta)$ such that
\[\frac{\sep}{\log(n)}\to S(\theta)\quad \text{and}\quad \frac{\sep^*}{\log(n)}\to S^*(\theta)\]
in probability as $\nto$.
\end{conjecture}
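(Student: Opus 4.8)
}
The route I would take is a sharp threshold argument, refining the proof of \cref{thm_sepsys}. The crucial missing ingredient is to upgrade the one-sided estimates of \cref{sec_enum} to two-sided ones: the cutting operations behind \cref{lem_simple,lem_fat,lem_thin} are already reversible, so the only loss lies in the appeals to \cref{lem_filling_boundaries} and \cref{lem_asympto_cut}, and one would replace these by matching asymptotics for triangulations with $O(\log n)$-long boundaries in the high-genus regime (provable, in principle, by reversing the boundary-filling construction and invoking \cref{eq_ratioo} with the uniform error term of \cref{lem_asympto_genus_ratio}). This should give, for each of the three structures of \cref{thm_eights}, the expected number of occurrences of total length $\ell$ up to a factor $n^{o(1)}e^{o(\ell)}$; combined with the reduction of \cref{thm_eights}, it pins down --- up to $n^{o(1)}$ --- the relevant first moment $\mu_{\ell}$ (the expected number of \sncc{s} of length $\le\ell$, controlled via short simple \sncc{s}, \te{s} and \fe{s}), of the shape $\mu_{\ell}=n^{-c+o(1)}\lambda(\theta)^{-\ell+o(\ell)}$ for an explicit positive $c=c(\theta)$, and similarly with a larger exponent when one restricts to \emph{simple} \sncc{s}. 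One would then \emph{define} $S(\theta)$ (resp.\ $S^{*}(\theta)$) as the value at which this quantity crosses $1$, namely $S(\theta)=c(\theta)/\log(1/\lambda(\theta))$; that $S^{*}(\theta)>S(\theta)$ should come out of the computation, reflecting that imposing simplicity is a strictly more costly constraint than the \fe/\te{} dichotomy.

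With the sharp first moment in hand, the lower bound $\sep\ge(S(\theta)-\eps)\log n$ \whp{} --- and likewise $\sep^{*}\ge(S^{*}(\theta)-\eps)\log n$ --- is a direct refinement of \cref{prop_simple,prop_fat,prop_thin}: below the threshold $\mu_{\ell}\to0$, so Markov's inequality rules out such cycles. The upper bound $\sep\le(S(\theta)+\eps)\log n$ \whp{} is the genuinely hard direction; note that the deterministic \cref{thm_upper} only yields the constant $c''\sqrt{2/\theta}$, which is not expected to equal $S(\theta)$, so one truly needs a probabilistic existence statement. The natural tool is a second moment (Paley--Zygmund) argument, requiring control of $\EE[N_{\le\ell}^{2}]$ just above the threshold and a proof that it equals $(1+o(1))\mu_{\ell}^{2}$ there. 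This is a quantitative independence statement --- two short separating cycles should typically lie far apart, as suggested by Benjamini--Schramm convergence of $\bT$ towards the local limit of \cite{BL19,BL20} --- and the bulk of the work would be in bounding the ``diagonal'' contributions: pairs of cycles sharing vertices or edges, homotopic or nested cycles, or pairs whose simultaneous removal forces the complement into an atypical split of size and genus. Handling these should require extending the geometric decomposition of \cref{sec_geom} from one minimal cycle to configurations of two cycles, together with a companion enumeration of triangulations cut along two eights at once --- a count in the spirit of \cref{lem_thin}, but one dimension higher.

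For the simple separating systole $\sep^{*}$ there is a further, more fundamental hurdle that precedes any concentration statement: one must first know that $\bT$ contains a simple \sncc{} at all, i.e.\ \cref{conj_simple}, which is the random-triangulation form of Barnette's conjecture and is open. A plausible approach is to construct a short simple \sncc{} by hand \whp{} --- for instance by exhibiting, in the neighbourhood of a short non-separating cycle located via the local limit, a ``handle'' that can be amputated along a short simple curve --- and only then to feed it into the threshold machinery above. Establishing this existence is, I expect, where the real difficulty for $\sep^{*}$ lies; once it is known, the two-sided enumerative bounds and the second-moment concentration should proceed exactly as for $\sep$.
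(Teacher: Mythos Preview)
The statement you are addressing is \cref{conj_constant}, which the paper explicitly presents as an \emph{open conjecture} in \cref{sec_conj}; there is no proof in the paper to compare your proposal against. Your write-up is accordingly not a proof either, but a programme --- and you signal this yourself with the heading ``Towards \cref{conj_constant}'' and repeated hedges (``should give'', ``in principle'', ``I expect'').

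As a strategy sketch it is broadly sensible. The first-moment refinement you describe for the lower bound is the natural sharpening of \cref{prop_simple,prop_fat,prop_thin}, and your identification of the second-moment/Paley--Zygmund step as the hard direction is correct: the deterministic \cref{thm_upper} indeed gives only a non-sharp constant, so a probabilistic existence argument is genuinely needed. You also correctly flag that any statement about $\sep^{*}$ presupposes \cref{conj_simple}, which the paper leaves open.

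That said, two points deserve caution. First, your claim that the cutting operations of \cref{lem_simple,lem_fat,lem_thin} are ``already reversible'' and that the only loss is in \cref{lem_filling_boundaries} and \cref{lem_asympto_cut} understates the difficulty: the reduction \cref{thm_eights} from an arbitrary short \sncc{} to one of the three canonical structures is itself many-to-one and lossy, so a sharp first moment for \sncc{s} does not follow just from sharp counts of simple cycles, \fe{s} and \te{s}. Second, your proposed formula $S(\theta)=c(\theta)/\log(1/\lambda(\theta))$ depends on a single exponent $c(\theta)$ governing all three structures simultaneously; nothing in the paper establishes that the three first moments share a common exponential rate, and the paper's own remark after \cref{conj_constant} --- that $S^{*}(\theta)$ should be \emph{strictly} larger than $S(\theta)$ --- suggests the rates differ across structures, so identifying the correct $c(\theta)$ is itself part of the open problem.
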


Note that we expect the simple separating systole to be significantly larger than the separating systole.

We finish this section with a deterministic conjecture regarding the structure of the shortest \sncc{} in a triangulation.

\begin{conjecture}\label{conj_decomp}
Let $t$ be a triangulation and $C$ a \sncc{} of $t$ of shortest length. Then one cannot write $C=\gamma^2C'$, where $\gamma$ is a simple cycle.
\end{conjecture}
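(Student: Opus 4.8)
}
Since this concerns the \emph{shortest} $\sncc$, it is a deterministic statement, and only the weaker fact that the specific minimal cycle built in \cref{sec_geom} need not be of this form is what the paper actually uses. Here is how I would attempt the full conjecture. Suppose for contradiction that $C=\gamma^2C'$ is a shortest $\sncc$ with $\gamma$ simple. First, some reductions. A shortest $\sncc$ is automatically cyclically reduced, since cancelling a backtracking produces a freely homotopic — hence still separating and non-contractible — cycle of strictly smaller length. Next, $\gamma$ must be non-separating: if $\gamma$ were contractible then $C\simeq C'$, a strictly shorter cycle in the same homology class, non-contractible because $C$ is, so $C'$ would be a shorter $\sncc$ unless it is empty — but then $C=\gamma^2$ is contractible, a contradiction; and if $\gamma$ were separating non-contractible it would itself be a shorter $\sncc$. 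Finally $C'$ is non-empty and non-contractible, for otherwise $C\simeq\gamma^2$, which is non-separating since $[\gamma^2]=2[\gamma]\neq 0$ in the torsion-free group $H_1$, contradicting that $C$ is separating. So from now on $\gamma$ is simple non-separating, $C'$ is a non-empty non-contractible cycle based at the common vertex $v$, and $[C']=-2[\gamma]$ in homology.

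The geometric plan is then to cut $t$ along $\gamma$, obtaining a connected triangulation with boundary $t'$ of genus $g-1$ carrying two boundary cycles $B_1,B_2$, both copies of $\gamma$, glued back to recover $t$. The portion $\gamma\gamma$ of $C$, perturbed off $\gamma$, winds around $\gamma$ with total degree $2$, so after the cut one of the two turns runs alongside $B_1$ and the other alongside $B_2$, joined near $v$ by one transverse crossing; what remains of $C'$ becomes an arc $\bar C'$ of $t'$ from a point of $B_2$ to a point of $B_1$, crossing $B_1\cup B_2$ some number of times. I would then try to reconnect these arcs inside $t'$ — keeping one boundary run and rerouting the other through $\bar C'$ and across suitably chosen arcs of $t'$ — so that, after regluing, one lands on a cycle of $t$ of length at most $|\gamma|+|C'|<|C|$ that is non-contractible and null-homologous, using the set-length bookkeeping and the homotopy manipulations of \cref{lem_te,lem_fe} to push the argument through. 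An alternative angle: a regular neighbourhood of $\gamma$ together with a transverse simple closed curve $\delta$ is a once-punctured torus whose boundary is an $\sncc$ of length roughly $2|\gamma|+2|\delta|$, so it would suffice to extract from $C'$, using $[C']=-2[\gamma]$, such a $\delta$ of length $<|C'|/2$.

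The main obstacle — and the reason this is still only a conjecture — is that neither route controls the combinatorial length for free: homotopy and homology moves produce cycles in any prescribed class but not shorter ones, and $C'$ can be genuinely intricate, since being homologous to $-2[\gamma]$ it is non-primitive, hence not simple, and it may cross $\gamma$ many times or even run along $\gamma$ itself. What one really wants is a \emph{length-nonincreasing} surgery: the two parallel strands of $C$ along $\gamma$ cobound a thin annulus, and one would like to trade the pattern ``wind around $\gamma$ twice, then traverse $C'$'' for a shorter representative of a separating class by rerouting one strand across this annulus exactly where $C'$ attaches. Guaranteeing that the outcome is genuinely \emph{separating} (rather than merely non-separating or contractible) in an arbitrary triangulation, while keeping its length strictly below $|C|$, is the point I expect to be hardest and where most of the work would lie.
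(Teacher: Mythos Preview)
This statement is a \emph{conjecture}, not a theorem: the paper does not prove it. What follows the statement in the paper is a two-sentence heuristic only --- the easy cases are when $C$ is simple or $\gamma$ is separating, and in the remaining hard case the paper suggests that from $\gamma^2 C'$ one might extract a shorter \sncc{} of the form $\gamma C''$ with $C''$ obtained from $C'$ by ``removing an occurrence of $\gamma^{-1}$''. You are likewise explicit that your proposal is not a proof, and you correctly flag that the paper's own arguments in \cref{sec_geom} do not depend on the conjecture.

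Your reductions are correct and agree with (indeed, refine) the paper's one-line remark: the only genuinely hard case is $\gamma$ simple non-separating, $C'$ non-empty non-contractible, and $[C']=-2[\gamma]$ in $H_1$. Your cut-along-$\gamma$ strategy and the annulus/once-punctured-torus alternative are more elaborate than the paper's intuition, but neither yours nor the paper's sketch controls length while simultaneously guaranteeing that the rerouted cycle stays separating and non-contractible, and you identify this obstacle precisely. In short: there is no paper proof to compare against, your sketch is sound as far as it goes, and your diagnosis of where the difficulty lies matches the paper's.
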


Note that this conjecture is very easy to prove unless $C$ is non simple and $\gamma$ is non separating. Even in that case, intuitively, starting by doing two loops around $\gamma$ seems wasteful if one seeks to minimize the length, and perhaps from any \sncc{} of the form $\gamma^2C'$ one can find another, shorter \sncc{} $\gamma C''$ where $C''$ is "$C'$ with an occurrence of $\gamma\inv$ removed".

\bibliographystyle{abbrv}
\bibliography{bibli}

\end{document}